\definecolor{verylight}{gray}{0.97}
\definecolor{light}{gray}{0.9}
\definecolor{medium}{gray}{0.85}
\def\frk{\frak}               
\def\mm{{\frk m}}
\def\Phi{{\frk n}}
\def\Phi{{\frk N}}
\def\opn#1#2{\def#1{\operatorname{#2}}} 
\opn\chara{char} \opn\length{\ell} \opn\pd{pd} \opn\rk{rk}
\opn\projdim{proj\,dim} \opn\injdim{inj\,dim} \opn\rank{rank}
\opn\depth{depth} \opn\grade{grade} \opn\height{height}
\opn\embdim{emb\,dim} \opn\codim{codim} \opn\dim{dim}
\opn\sdepth{sdepth} \opn\sqdepth{sqdepth}
\opn\Tr{Tr} \opn\bigrank{big\,rank}
\opn\superheight{superheight}\opn\lcm{lcm}
\opn\trdeg{tr\,deg}%
\opn\reg{reg} \opn\lreg{lreg} \opn\ini{in}
\opn\div{div} \opn\Div{Div} \opn\cl{cl} \opn\Cl{Cl}
\opn\Spec{Spec} \opn\Supp{Supp} \opn\supp{supp} \opn\Sing{Sing}
\opn\Ass{Ass}  \opn\Min{Min}
\opn\Ann{Ann} \opn\Rad{Rad} \opn\Soc{Soc}
\opn\Ker{Ker} \opn\Coker{Coker} \opn\Am{Am} \opn\Hom{Hom}
\opn\Tor{Tor} \opn\Ext{Ext} \opn\End{End} \opn\Aut{Aut}
\opn\id{id}  \opn\e{e} \opn\hreg{hreg}
\opn\nat{nat} \opn\deg{deg} \opn\adeg{adeg} \opn\Hilb{Hilb}
\opn\pff{pf}
\opn\Pf{Pf} \opn\GL{GL} \opn\SL{SL} \opn\mod{mod} \opn\ord{ord}
\opn\aff{aff} \opn\con{conv} \opn\relint{relint} \opn\st{st}
\opn\lk{lk} \opn\cn{cn} \opn\core{core} \opn\vol{vol}
\opn\link{link}  \opn\infpt{infpt} \opn\mult{mult}
\opn\gr{gr} \opn\Std{Std}
\def\pot#1#2{#1[\kern-0.28ex[#2]\kern-0.28ex]}
\opn\dirlim{\underrightarrow{\lim}}
\opn\inivlim{\underleftarrow{\lim}}
\let\iso=\cong
\let\Union=\bigcup
\let\Dirsum=\bigoplus
\def\Implies{\ifmmode\Longrightarrow \else
     \unskip${}\Longrightarrow{}$\ignorespaces\fi}
\def\implies{\ifmmode\Rightarrow \else
     \unskip${}\Rightarrow{}$\ignorespaces\fi}
\def\iff{\ifmmode\Longleftrightarrow \else
     \unskip${}\Longleftrightarrow{}$\ignorespaces\fi}
\newtheorem{Theorem}{Theorem}[section]
\newtheorem{Lemma}[Theorem]{Lemma}
\newtheorem{Corollary}[Theorem]{Corollary}
\newtheorem{Proposition}[Theorem]{Proposition}
\newtheorem{Remark}[Theorem]{Remark}
\newtheorem{Example}[Theorem]{Example}
\let\epsilon\varepsilon
\let\phi=\varphi
\let\kappa=\varkappa
\opn\dis{dis}
\def\pnt{{\raise0.5mm\hbox{\large\bf.}}}
\begin{document}

\title{Pretty clean monomial ideals and linear quotients}

\author{Ali Soleyman Jahan and Xinxian Zheng}
\address{Ali Soleyman Jahan, Fachbereich Mathematik und
Informatik, Universit\"at Duisburg-Essen, Campus Essen, 45117 Essen,
Germany} \email{ali.soleyman-jahan@stud.uni-duisburg-essen.de}

\address{Xinxian Zheng, Fachbereich Mathematik und
Informatik, Universit\"at Duisburg-Essen, Campus Essen, 45117
Essen, Germany} \email{xinxian.zheng@uni-essen.de}

\thanks{The second author is grateful for the financial support by DFG (Deutsche Forschungsgemeinschaft)
during the preparation of this work}

\subjclass{13F20,  13F55 , 13A30, 16W70} \keywords{linear quotients,
pretty clean modules, shellability, Stanley decomposition}

\maketitle

\begin{abstract}
We study basic properties of monomial ideals with linear quotients.
It is shown  that if the monomial ideal $I$ has linear quotients,
then the squarefree part of $I$ and each component of $I$ as well as
$\mm I$ have linear quotients, where $\mm$ is the graded maximal
ideal of the polynomial ring. As an analogy to the Rearrangement
Lemma of Bj\"orner and Wachs we also show that for a monomial  ideal
with linear quotients the admissible order of the generators can be
chosen degree increasingly.

As a generalization of the facet ideal of a forest, we define
monomial ideals of forest type and show that they are pretty clean.
This result recovers a recent result of Tuly and Villarreal about
the shellability of a clutter with the free vertex property. As
another consequence of this result we show that if $I$ is a monomial
ideal of forest type, then Stanley's conjecture on Stanley
decomposition holds for $S/I$. We also show that a clutter is
totally balanced if and only if it has the free vertex property.
\end{abstract}

\section*{Introduction}
Let $K$ be a field, $S=K[x_1,\ldots,x_n]$  the polynomial ring in
$n$ variables, and  $I\subset S$ a monomial ideal.  We  denote by
$G(I)$ the unique minimal monomial system of generators of  $I$. We
say that  $I$ has linear quotients, if there exists an order
$\sigma=u_1,\ldots,u_m$ of $G(I)$ such that  the ideal
$(u_1,\ldots,u_{i-1}):u_i$ is generated by a subset of the variables
for $i=2,\ldots, m$. We denote this subset by $q_{u_i,\sigma}(I)$.
Any order of the generators for which we have linear quotients will
be  called an admissible order.  Ideals with linear quotients were
introduced by Herzog and Takayama \cite{HT}. If each component of
$I$ has linear quotients, then we say $I$ has componentwise linear
quotients.

The concept of linear quotients, similarly as  the concept  of shellability,
is purely combinatorial. However both concepts have strong algebraic implications.
Indeed, an ideal with linear quotients has componentwise linear resolutions while
shellability of a simplicial complex implies that it is sequentially Cohen-Macaulay.
These similarities are not accidental. In fact, let $\Delta$ be a simplicial complex
and $I_\Delta$ its Stanley-Reisner ideal. It is well-known  that $I_\Delta$ has linear
quotients if and only if the Alexander dual of $\Delta$  is shellable. Thus at least
in the squarefree case  ``linear quotients" and ``shellability" are dual concepts.
On the other hand, linear quotients are not only defined for squarefree  monomial ideals,
and hence  this concept is more general than that of shellability.

In this paper we prove some fundamental properties of monomial
ideals with linear quotients. In general, the product of two ideals
with linear quotients need not to have linear quotients, even if one
of them is generated by a subset of the variables, see Example~
\ref{concaherzog}. However in Lemma \ref{product}, we show that if
$I\subset S$ is a monomial ideal with linear quotients, then $\mm I$
has linear quotients, where $\mm=(x_1,\ldots,x_n)$ is the graded
maximal  ideal of $S$.

Let $I$ be a monomial ideal with linear
quotients and $\sigma=u_1,\ldots,u_m$ an admissible order of $G(I)$.
It is not hard to see that $\deg u_i\geq \min \{\deg u_1,\ldots,\deg
u_{i-1}\}$, for all $i\in [m]=\{1,\ldots,m\}$. But this order need
not to be a degree increasing order. We show in Lemma~\ref{increasing},
that there exists a degree increasing admissible
order $\sigma'$ induced by $\sigma$. Furthermore, one has
$q_{u,\sigma}(I)= q_{u,\sigma'}(I)$ for any $u\in G(I)$, see
Proposition \ref{do not change}. This implies in particular the
 ``Rearrangement Lemma''  of Bj\"orner and Wachs \cite{BW}.

As a main result of Section 2, we show in Theorem \ref{linear}, that
any monomial ideal with linear quotients has componentwise linear
quotients, and hence it is componentwise linear. Conversely,
assuming that all components of $I$ have linear quotients, we can
prove that $I$ has linear quotients only under some extra
assumption, see Proposition \ref{noname}. It would be  of interest
to know whether  the converse of Theorem \ref{linear} is true in
general.

Herzog and Hibi showed in  \cite{HH} that a squarefree monomial
ideal $I$ is componentwise linear if and only if the squarefree part
of each component has a linear resolution. We would like to remark
that the ``only if'' part of this statement is true more generally.
Indeed for {\em any} componentwise linear monomial ideal, the
squarefree part of each component has a linear resolution.   Here we
prove a slightly different result by showing that if a monomial
ideal $I$ has  linear quotients, then the squarefree part of $I$ has
linear quotients. This together with Theorem \ref{linear} implies
that the squarefree part of each component of $I$ has again linear
quotients. As a corollary of the above facts we obtain that if
$\Delta$ is shellable, then each facet skeleton (see the definition
in Section 2) of $\Delta$ is shellable. Unless $\Delta$ is pure,
this result differs from the well-known fact that each skeleton of a
shellable simplicial complex is again shellable.

In Section 3, we give a large and combinatorially interesting class
$\mathcal I$ of monomial ideals which are pretty clean (Theorem
\ref{pretty clean}), and hence Stanley's conjecture on Stanley
decompositions \cite{St} holds for $S/I$. As another consequence of
Theorem \ref{pretty clean} we get the main result of \cite{Fa3},
which says that $S/I(\Delta)$ is sequentially Cohen-Macaulay for any
forest $\Delta$, as defined by Faridi \cite{Fa1}. The class
$\mathcal I$ is a non squarefree version of the class of  facet
ideals of forests. Any ideal in $\mathcal I$ is called a monomial
ideal of forest type. We show in Theorem \ref{freetree} that $I$ is
a monomial ideal of forest type if and only if $I$ has the free
variable property. Identifying a squarefree monomial ideal with a
clutter, Theorem~\ref{freetree} says that a clutter has the free
vertex property in the sense of Tuyl and Villarreal if and only if
the clutter corresponds to a forest in the sense of Faridi,
equivalently, a totally balanced clutter in the language of
hypergraphs. Let $\mathcal C$ be a clutter, and let
$\Delta_{\mathcal C}$ be  the simplicial complex whose
Stanley--Reisner ideal is the edge ideal of $\mathcal C$. In
\cite[Theorem 5.3]{TV} Villiarreal and Tuyl show that
$\Delta_{\mathcal C}$ is shellable if $\mathcal C$ has  the free
vertex property. Therefore Theorem~\ref{pretty clean} may be viewed
as a generalization of \cite[Theorem 5.3]{TV}.

In the last section we give some examples of quasi-forests. These
examples show that the facet ideal of a quasi-forest need not always
to be clean. It would be  interesting to classify all quasi-forests whose
facet ideals are clean.

\section{Preliminaries and background}
In this section we fix the terminology, review some notation on
simplicial complexes and setup some background.

A {\em simplicial complex} $\Delta$ over a set of vertices
$[n]=\{1,\ldots, n\}$ is a collection of subsets of $[n]$ with the
property that $i \in\Delta$ for all $i\in [n]$, and if $F\in\Delta$
then all the subsets of $F$ are also in $\Delta$ (including the
empty set). An element of $\Delta$ is called a {\em face} of
$\Delta$, and the maximal faces of $\Delta$ under inclusion are
called {\em facets}. We denote $\mathcal F(\Delta)$ the set of
facets of $\Delta$. The simplicial complex with facets $F_1,\ldots,
F_m$ is denoted by $\langle F_1,\ldots, F_m\rangle$. The {\em
dimension} of a face $F$ is defined as $|F|-1$, where $|F|$ is the
number of vertices of $F$. The dimension of the simplicial complex
$\Delta$ is the maximal dimension of its facets. A simplicial
complex $\Gamma$ is called a {\em subcomplex} of $\Delta$ if
${\mathcal F}(\Gamma)\subset {\mathcal F}(\Delta)$.

A subset $C$ of $[n]$ is called a {\em vertex cover} of $\Delta$, if
$C\cap F\neq \emptyset$ for all facets $F$ of $\Delta$. A vertex
cover $C$ is said to be minimal if no proper subset of $C$ is a
vertex cover of $\Delta$. Recently,  vertex cover algebra was
studied in \cite{HHT} and \cite{HHTZ}.

We denote by $S = K[x_1,\ldots,x_n]$  the polynomial ring in $n$
variables over a field $K$. To a given simplicial complex $\Delta$
on the vertex set $[n]$, the Stanley--Reisner ideal, whose
generators correspond to the non-faces of $\Delta$ is well studied,
see for example in \cite {St}, \cite {BH} and \cite {Hi} for
details. Another squarefree monomial ideal associated to $\Delta$,
so-called facet ideal, was first studied by Faridi \cite{Fa1}. The
ideal $I(\Delta)$ generated by all monomials $x_{i_1}\cdots x_{i_s}$
where $\{i_1,\ldots, i_s\}$ is a facet of  $\Delta$, is called the
{\em facet ideal of $\Delta$}. For a simplicial complex of dimension
1, the facet ideal is the {\em edge ideal}, which was first studied
by Villarreal \cite{Vi}.

The following definitions were first introduced by Faridi in \cite
{Fa1}. Let $\Delta$ be a simplicial complex. A facet $F$ of $\Delta$
is called a {\em leaf} if either $F$ is the only facet of $\Delta$,
or there exists a facet $G\neq F$ in $\Delta$ such that $F\cap
H\subseteq F\cap G$ for any facet $H\in\Delta$, $H\neq F$. The facet
$G$ is called a {\em branch} of $F$. A simplicial complex $\Delta$
is called a {\em tree} if it is connected and every nonempty
subcomplex of $\Delta$ has a leaf. A simplicial complex $\Delta$
with the property that every connected component is a tree is called
a {\em forest}. A vertex $t\in F$ is called a free vertex of $F$ if
$F\in \mathcal F(\Delta)$ is the unique facet which contains $t$. It
is easy to see that any leaf has a free vertex.

Recall that the {\em Alexander dual} $\Delta^\vee$ of a simplicial
complex $\Delta$ is the simplicial complex whose faces are
$\{[n]\setminus F\:\; F\not\in \Delta\}$.  Let $I$ be a squarefree
monomial ideal in $S$. We denote by $I^\vee$ the squarefree
 monomial ideal which minimally generated by all monomials $x_{i_1}\cdots x_{i_k}$, where
$(x_{i_1},\ldots, x_{i_k})$ is a minimal prime ideal of $I$. It is
easy to see that for any simplicial complex $\Delta$, one has
$I_{\Delta^\vee}=(I_\Delta)^\vee$. Let $\Delta^c=\langle
[n]\setminus F\: F\in\mathcal F(\Delta)\rangle$. Then
$I_{\Delta^\vee}=I(\Delta^c)$, see \cite{HHZ}.

 For any set $U\subset [n]$, we denote $u=\prod_{j\in U} x_j$ the
squarefree monomial in $S$ whose support is $U$. In general, for
any monomial $u\in S$, the {\em support} of $u$ is $\supp
(u)=\{j\: x_j\mid u\}$.

\begin{Remark}
\label{cover} {\em Let $\Delta$ be a simplicial complex on $[n]$.
Then $$G(I(\Delta)^\vee)=\{u=\prod_{j\in U} x_j\:\; \text{where $U$
is a minimal vertex cover of } \Delta\}.$$}
\end{Remark}

Now we recall the definition of clean and pretty clean modules of
the type $S/I$, where $I\subset S$ is a monomial ideal. According
to \cite{HP}, a  filtration $\mathcal F\:I=I_0\subset I_1\subset
\cdots\subset I_r=S$ of $S/I$ is called a {\em pretty clean
filtration } if
\begin{enumerate}
\item[(a)] for all $j$ one has $I_j/I_{j-1}\iso S/P_j$ where $P_j$
is a monomial prime ideal;

\item[(b)] for all $i<j$, if $P_i\subset P_j$,
then $P_i=P_j$.
\end{enumerate}
The set of prime ideals $\{P_1,\ldots,P_r\}$ is called the {\em
support} of $\mathcal F$ and denoted by $\Supp(\mathcal F)$. The
module $S/I$ is called {\em pretty clean} if it has a pretty
 clean filtration.

Dress \cite{Dr} calls the ring $S/I$  {\em clean}, if there exists a
chain of ideals as above such that all the $P_i$ are minimal prime
ideals of $I$. By an abuse of notation we call $I$ (pretty) clean if
$S/I$ is (pretty) clean. Obviously, any clean ideal is pretty clean.
If $I$ is a squarefree monomial ideal, then pretty clean implies
also clean. The following fact was first shown by Dress.

\begin{Theorem} \cite{Dr}
\label{Dress} Let $\Delta$ be a simplicial complex  and
$I=I_{\Delta}\subset S$ its Stanley-Reisner ideal. Then the
simplicial complex $\Delta$ is (non-pure) shellable  if and only
if $I_{\Delta}$ is clean.
\end{Theorem}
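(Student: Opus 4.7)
The plan exploits the bijection between facets of $\Delta$ and minimal primes of $I_\Delta$: to each facet $F \in \mathcal F(\Delta)$ one associates the monomial prime $P_F = (x_i \: i \notin F)$, and every minimal prime of $I_\Delta$ has this form.

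For ``shellable $\implies$ clean'', let $F_1,\dots,F_m$ be a shelling of $\Delta$. For each $i \geq 2$, the shelling axiom yields a unique minimal face $R(F_i) \subseteq F_i$ (the shelling restriction) such that
$$\{A \in \Delta \: A \subseteq F_i,\; A \not\subseteq F_j \text{ for all } j<i\} = \{A \: R(F_i) \subseteq A \subseteq F_i\}.$$
Setting $\Delta_i = \langle F_1,\dots,F_i \rangle$, I would consider the ascending chain of monomial ideals
$$I_\Delta = I_{\Delta_m} \subset I_{\Delta_{m-1}} \subset \cdots \subset I_{\Delta_1} \subset S,$$
and verify at each step that $I_{\Delta_{i-1}} = I_{\Delta_i} + (x^{R(F_i)})$ and $(I_{\Delta_i} : x^{R(F_i)}) = P_{F_i}$. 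The first identity holds because the monomials in the difference are precisely those whose support $A$ satisfies $R(F_i) \subseteq A \subseteq F_i$, hence are divisible by $x^{R(F_i)}$ with cofactor supported in $F_i$. The colon identity uses that $F_i$ is the unique facet of $\Delta_i$ containing $R(F_i)$, a direct consequence of the shelling property. Thus each quotient is isomorphic to $S/P_{F_i}$; since the $P_{F_i}$ are all minimal primes of $I_\Delta$, this yields a clean filtration (with one additional trivial step at the end to pass from $P_{F_1}$ to $S$).

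For the reverse direction, assume $S/I_\Delta$ is clean with filtration $I_\Delta = I_0 \subset I_1 \subset \cdots \subset I_r = S$, $I_j/I_{j-1} \cong S/P_{G_j}$, with each $G_j \in \mathcal F(\Delta)$. Because $I_\Delta$ is squarefree, $S/I_\Delta$ is reduced and each minimal prime occurs with multiplicity one in any such filtration (a Hilbert-series comparison with the primary decomposition $I_\Delta = \bigcap_{F} P_F$ forces this), so $r = m$ and the $G_j$'s are a permutation of $\mathcal F(\Delta)$. For each $j$, choose a monomial $u_j$ with $I_j = I_{j-1} + (u_j)$ and $(I_{j-1} : u_j) = P_{G_j}$. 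The colon identity forces $\supp(u_j) \subseteq G_j$ with $G_j$ the unique facet ``still present'' at stage $j$ containing $\supp(u_j)$. Reversing the order to $G_m, G_{m-1}, \dots, G_1$ and declaring the restriction at $G_j$ to be $\supp(u_j)$ then translates the colon conditions into the non-pure shelling axioms of Bj\"orner--Wachs.

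The main obstacle is the reverse direction: one must show that the intermediate ideals $I_j$ can be chosen to be Stanley--Reisner ideals of subcomplexes of $\Delta$, so that $\supp(u_j)$ acquires a combinatorial meaning as the minimal newly exposed face, and that each $u_j$ may be taken squarefree. This uses the squarefreeness of $I_\Delta$ together with an induction showing that every clean augmentation of a squarefree ideal along a minimal prime remains squarefree; once this is in place, the dictionary between shelling restrictions and colon generators is essentially bookkeeping.
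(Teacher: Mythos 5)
First, note that the paper does not prove this statement at all: it is quoted as Theorem~\ref{Dress} with a citation to Dress, so there is no in-paper argument to compare yours against; I am judging the proposal on its own. Your forward direction is correct and essentially complete: the chain $I_{\Delta_m}\subset\cdots\subset I_{\Delta_1}\subset S$, the identity $I_{\Delta_{i-1}}=I_{\Delta_i}+(x^{R(F_i)})$, and the colon computation $(I_{\Delta_i}:x^{R(F_i)})=P_{F_i}$ (using that $F_i$ is the only facet of $\Delta_i$ containing $R(F_i)$) all check out, and the multiplicity-one count of minimal primes in the reverse direction is the standard localization/length argument for a radical ideal.

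The reverse direction, however, contains a genuine gap, and it sits exactly where you placed your ``main obstacle.'' A clean filtration only hands you monomials $u_j$ with $I_j=I_{j-1}+(u_j)$ and $(I_{j-1}:u_j)=P_{G_j}$; nothing a priori forces the $u_j$ to be squarefree or the $I_j$ to be Stanley--Reisner ideals, and your proposed remedy --- ``every clean augmentation of a squarefree ideal along a minimal prime remains squarefree'' --- is asserted, not proved, and is not the natural mechanism (a single augmentation step gives you no control over the exponents of $u_j$). The step can be closed, but by a different argument: the filtration yields a $K$-vector space decomposition $S/I_\Delta=\bigoplus_j \bar u_jK[x_i\: i\in G_j]$, and since each facet $G$ occurs exactly once among the $G_j$, the squarefree monomial $x^G$ must lie in the summand indexed by the unique $j$ with $G_j=G$; writing $x^G=u_jv$ with $\supp(v)\subseteq G_j$ forces $u_j$ squarefree with $\supp(u_j)\subseteq G_j$. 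Only after this does one get the interval partition $\Delta=\bigsqcup_j[\supp(u_j),G_j]$, and one still must verify the ordering condition: that a face of $G_j$ lying in some $[\supp(u_{j'}),G_{j'}]$ with $j'<j$ would force $x^{G_{j''}}\in I_{j''-1}$ for some $j''\geq j$, contradicting $(I_{j''-1}:u_{j''})=P_{G_{j''}}$. Without these two arguments the translation into the Bj\"orner--Wachs axioms is not ``essentially bookkeeping''; it is the content of the theorem in this direction.
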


 The following notion is  important for our later discussion.
 Let $I=(u_1,\ldots,u_m)$ be a monomial ideal in $S$. According to
\cite {HT}, the monomial ideal $I$ has linear quotients if one can
order the set of minimal generators of $I$,
$G(I)=\{u_1,\ldots,u_m\}$, such that the ideal
$(u_1,\ldots,u_{i-1}):u_i$ is generated by a subset of the variables
for $i=2,\ldots,m$.  This means for each $j<i$, there exists a $k<i$
such that $u_k:u_i=x_t$ and $x_t\mid u_j:u_i$, where $t\in [n]$ and
$u_k:u_i=u_k/\gcd(u_k,u_i)$. In the case that $I$ is squarefree, it
is enough to show that for each $j<i$, there exists a $k<i$ such
that $u_k:u_i=x_t$ and $x_t\mid u_j$. Such an order  of generators
is called an {\em admissible order} of $G(I)$. Let $\sigma =
u_1,\ldots,u_m$ be an admissible order of $G(I)$. We denote by
$q_{u_j,\sigma}(I)\subset \{x_1,\ldots,x_n\}$ the set of minimal
generators of $(u_1,\ldots,u_{j-1}):u_j$.

It is known that if $I$ is a monomial ideal with linear quotients
and generated in one degree, then $I$ has a linear resolution. See
for example in \cite {Zh} an easy proof.

\begin{Remark}
\label{change} {\em For an ideal which has linear quotients, there
might exist several admissible orders. For example, let
$I=(x_1x_2,x_1x_3^2x_4,x_2x_4)\subset K[x_1,x_2,x_3,x_4]$. Then
$\sigma_1=x_1x_2,x_1x_3^2x_4,x_2x_4$ and
$\sigma_2=x_1x_2,x_2x_4,x_1x_3^2x_4$ both are admissible orders of
$G(I)$.}
\end{Remark}

The following result relates squarefree monomial ideals with linear
quotients to (non-pure) shellable simplicial complexes. The concept
non-pure shellability was first defined by Bj\"orner and Wachs
\cite[Definition 2.1]{BW}.

\begin{Theorem}\cite[Theorem 1.4]{HHZ}
\label{hhz} Let $\Delta$ be a simplicial complex and
$(I_{\Delta})^\vee$ the Alexander dual of its Stanley-Reisner
ideal. Then $\Delta$ is (non-pure) shellable  if and only if
$(I_{\Delta})^\vee$ has linear quotients.
\end{Theorem}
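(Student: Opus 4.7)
The plan is to translate the Bj\"orner-Wachs shelling condition for $\Delta$ directly into the linear-quotient condition for $(I_\Delta)^\vee$ via the natural bijection between facets of $\Delta$ and minimal generators of $(I_\Delta)^\vee$. First I would identify the generators: using the identity $(I_\Delta)^\vee=I_{\Delta^\vee}$ already noted in the excerpt, a short argument shows that the minimal non-faces of $\Delta^\vee$ are precisely the complements $[n]\setminus F$ for $F\in\mathcal F(\Delta)$. So orderings $F_1,\dots,F_m$ of $\mathcal F(\Delta)$ correspond bijectively to orderings $u_1,\dots,u_m$ of $G((I_\Delta)^\vee)$ via $u_i=\prod_{j\in [n]\setminus F_i}x_j$.

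Next I would compute, for any $k<i$,
\[
u_k:u_i \;=\; u_k/\gcd(u_k,u_i) \;=\; \prod_{v\in F_i\setminus F_k} x_v.
\]
In particular, $u_k:u_i$ equals a single variable $x_v$ exactly when $F_k\cap F_i=F_i\setminus\{v\}$, and the divisibility $x_v\mid u_j:u_i$ is equivalent to $v\in F_i\setminus F_j$, i.e.\ $F_j\cap F_i\subseteq F_i\setminus\{v\}$.

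Combining these, the colon ideal $(u_1,\dots,u_{i-1}):u_i$ is generated by variables iff, for every $j<i$, the monomial $u_j:u_i$ is divisible by some $u_k:u_i=x_v$ with $k<i$. Translated back to $\Delta$, this says exactly that for every $j<i$ there exist $k<i$ and $v\in F_i$ with $F_j\cap F_i\subseteq F_k\cap F_i=F_i\setminus\{v\}$, which is the Bj\"orner-Wachs non-pure shelling condition \cite{BW}. Thus the same ordering witnesses both properties, and both implications drop out at once.

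The main obstacle is purely notational: one must fix the dictionary between facets of $\Delta$ and generators of $(I_\Delta)^\vee$ correctly (complements of facets of $\Delta$, not facets of $\Delta^\vee$), and recognize that the single-variable colons $u_k:u_i=x_v$ correspond exactly to the codimension-one intersections $F_k\cap F_i=F_i\setminus\{v\}$. Once this dictionary is in place, the argument needs no algebraic input beyond the elementary colon formula for squarefree monomials.
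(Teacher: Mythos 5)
Your argument is correct and is essentially the standard proof: the paper itself does not reprove Theorem \ref{hhz} (it is cited from \cite{HHZ}), but it records exactly your dictionary as the ``simple but important fact'' in Section 2, namely that $\delta=F_1,\ldots,F_m$ is a shelling if and only if $\sigma=u_1,\ldots,u_m$ with $u_i=\prod_{j\notin F_i}x_j$ is an admissible order, with $R_\delta(F_k)=q_{u_k,\sigma}(I_{\Delta^\vee})$. Your computation $u_k:u_i=\prod_{v\in F_i\setminus F_k}x_v$ and the resulting identification of single-variable colons with codimension-one intersections is precisely the right translation, so nothing further is needed.
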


Combining Theorem \ref{Dress} and Theorem \ref{hhz}, we get the
following

\begin{Corollary}
\label{important} Let $I\subset S$ be a squarefree  monomial ideal.
Then $I$ is clean if and only if $I^\vee$ has linear quotients.
\end{Corollary}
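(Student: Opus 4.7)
The plan is to chain the two equivalences stated in Theorem \ref{Dress} and Theorem \ref{hhz} through the Stanley--Reisner correspondence. Since $I$ is a squarefree monomial ideal, there is a unique simplicial complex $\Delta$ on $[n]$ with $I = I_\Delta$; the correspondence is the familiar bijection sending $\Delta$ to the ideal generated by the monomials $\prod_{i \in F} x_i$ with $F \notin \Delta$. Under this identification, $I^\vee$ agrees with $(I_\Delta)^\vee$, which has already been noted in the excerpt.

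The proof then proceeds in two lines. First, by Theorem \ref{Dress}, $I = I_\Delta$ is clean if and only if $\Delta$ is (non-pure) shellable. Second, by Theorem \ref{hhz}, $\Delta$ is (non-pure) shellable if and only if $(I_\Delta)^\vee = I^\vee$ has linear quotients. Concatenating these two biconditionals yields that $I$ is clean if and only if $I^\vee$ has linear quotients, which is the claim.

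There is essentially no obstacle here: the statement is a formal corollary, and the only point that deserves mention is that cleanness and linear quotients are properties of the ideal (not of a chosen presentation of the complex), so the identification $I \leftrightarrow \Delta$ is unambiguous once $I$ is assumed squarefree. No additional hypothesis on purity is required, since both Theorem \ref{Dress} and Theorem \ref{hhz} are stated in the non-pure setting.
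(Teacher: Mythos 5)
Your proof is correct and is exactly the paper's argument: the paper derives the corollary by combining Theorem \ref{Dress} ($\Delta$ shellable iff $I_\Delta$ clean) with Theorem \ref{hhz} ($\Delta$ shellable iff $(I_\Delta)^\vee$ has linear quotients) via the Stanley--Reisner correspondence $I = I_\Delta$. No further comment is needed.
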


\section{Monomial ideals with linear quotients}

In this section we prove some fundamental properties of ideals with linear quotients.

Let $I\subset S$ be a monomial ideal with linear quotients and
$u_1,\ldots,u_m$ an admissible  order  of $G(I)$. It is easy to
see that $\deg u_i\geq \min\{\deg u_1,\ldots, \deg u_{i-1}\}$ for
$i=2,\ldots, m$. In particular,  $\deg u_1=\min\{\deg u_1,\ldots,
\deg u_{m}\}$. But in general, this order need not to be a degree
increasing order. For example, the ideal $I=(x_1x_2, x_1x_3^2x_4,
x_2x_4)$ has linear quotients in the given order, but $\deg
x_1x_3^2x_4> \deg x_2x_4$.

In the following lemma we show that for any ideal with linear
quotients there exists an admissible  order $u_1,\ldots, u_m$ of
$G(I)$ such that $\deg u_i\leq \deg u_{i+1}$ for $i=1,\ldots, m-1$.
We call such an order a {\em degree increasing admissible  order}.

\begin{Lemma}
\label{increasing} Let $I\subset S$ be a monomial ideal with linear
quotients. Then there is a degree increasing admissible order of
$G(I)$.
\end{Lemma}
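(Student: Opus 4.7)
The plan is to prove the lemma by a bubble-sort argument. Starting from an arbitrary admissible order $\sigma=u_1,\ldots,u_m$, I will show that whenever $\deg u_i>\deg u_{i+1}$ for some $i$, the swapped order $\sigma'=u_1,\ldots,u_{i-1},u_{i+1},u_i,u_{i+2},\ldots,u_m$ is again admissible. Since each such swap strictly decreases the number of degree-inversions (pairs $1\le j<k\le m$ with $\deg u_j>\deg u_k$), the process terminates after finitely many steps at a degree-increasing admissible order.

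The real content is verifying that the adjacent swap preserves admissibility. The only new conditions to check are at positions $i$ and $i+1$ of the new order, namely that $(u_1,\ldots,u_{i-1}):u_{i+1}$ and $(u_1,\ldots,u_{i-1},u_{i+1}):u_i$ are generated by variables; at later positions the ideal $(u_1,\ldots,u_{r-1})$ is unchanged, so the condition is identical to that of $\sigma$. The key input for both checks is that $u_i$ and $u_{i+1}$ are distinct minimal monomial generators of $I$, so neither divides the other.

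For the first check, write $(u_1,\ldots,u_i):u_{i+1}=(x_{j_1},\ldots,x_{j_k})$ by the admissibility of $\sigma$. Each minimal generator $x_{j_s}$ arises as $u_{\ell_s}:u_{i+1}=x_{j_s}$ for some $\ell_s\le i$. I claim $\ell_s\le i-1$. Indeed, if $\ell_s=i$ then $u_i:u_{i+1}=x_{j_s}$ means $u_i=x_{j_s}\cdot\gcd(u_i,u_{i+1})$; since $\deg(u_i/x_{j_s})=\deg u_i-1$ and since $\gcd(u_i,u_{i+1})$ divides $u_{i+1}$ with $\deg u_{i+1}<\deg u_i$, comparing degrees forces $\gcd(u_i,u_{i+1})=u_{i+1}$, i.e.\ $u_{i+1}\mid u_i$, contradicting minimality. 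Hence every $x_{j_s}$ already lies in $(u_1,\ldots,u_{i-1}):u_{i+1}$, which therefore equals $(x_{j_1},\ldots,x_{j_k})$. As a bonus, this shows $q_{u_{i+1},\sigma'}(I)=q_{u_{i+1},\sigma}(I)$, which is the content of the promised Proposition~\ref{do not change}.

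For the second check, the first check gives in particular $u_i:u_{i+1}\in(u_1,\ldots,u_{i-1}):u_{i+1}$; multiplying by $u_{i+1}$ this reads $\lcm(u_i,u_{i+1})\in(u_1,\ldots,u_{i-1})$. Rewriting $\lcm(u_i,u_{i+1})=u_i\cdot(u_{i+1}:u_i)$ yields $u_{i+1}:u_i\in(u_1,\ldots,u_{i-1}):u_i$, whence $(u_1,\ldots,u_{i-1},u_{i+1}):u_i=(u_1,\ldots,u_{i-1}):u_i$, which is generated by variables by the admissibility of $\sigma$. The main obstacle I anticipate is precisely the boundary case $\deg u_i=\deg u_{i+1}+1$ inside the first check, where one fears a genuinely new non-variable generator might appear in the colon; as shown, minimality of $G(I)$ rules this out, and everything else is bookkeeping.
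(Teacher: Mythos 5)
Your proof is correct, and it takes a genuinely different route from the paper's. The paper argues by induction on $m$: it sorts $u_1,\ldots,u_{m-1}$ by the inductive hypothesis and then inserts $u_m$ just before the first generator of strictly larger degree, verifying admissibility of the new order through a case analysis on the exponents $\deg_{x_d}$. You instead use adjacent transpositions, and the two local checks you isolate are exactly the right ones. The heart of your first check --- that $u_i:u_{i+1}$ cannot be a single variable when $\deg u_i>\deg u_{i+1}$, since that would force $u_{i+1}\mid u_i$ --- is the same degree-versus-divisibility tension the paper exploits, but your second check ($\lcm(u_i,u_{i+1})\in(u_1,\ldots,u_{i-1})$, hence $u_{i+1}:u_i\in(u_1,\ldots,u_{i-1}):u_i$) replaces the paper's exponent bookkeeping with a cleaner ideal-theoretic manipulation. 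A real payoff of your route is that both checks show the colon ideals at positions $i$ and $i+1$ are \emph{unchanged as ideals} (the second check gives this for $u_i$, not only for $u_{i+1}$ as you note), while the colons at all other positions are literally the same; iterating over the swaps, this yields Proposition \ref{do not change} essentially for free, since your bubble sort and the paper's insertion both realize the stable sort of $\sigma$ by degree and hence produce the same $\sigma'$. One cosmetic point: read literally at $i=1$, your first check would put a variable into the zero ideal $(u_1,\ldots,u_{i-1})$; this is harmless because, as observed just before the lemma, $\deg u_1=\min_j\deg u_j$ in any admissible order, so an inversion at position $1$ never occurs (and your own divisibility argument reproves that fact).
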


\begin{proof}
We use induction on $m$, the number of generators of $I$, to prove
the statement. If $m=1$, there is nothing to show.

Assume $m>1$ and $u_1,\ldots, u_m$ is an admissible  order. It is
clear that $J=(u_1,\ldots, u_{m-1})$ has linear quotients with the
given order. By induction hypothesis, we may assume that $\deg
u_i\leq \deg u_{i+1}$ for $i=1,\ldots, m-2$.  Assume that $\deg
u_{m-1}> \deg u_m$. Let $j+1$ be the smallest integer such that
$\deg u_{j+1}> \deg u_{m}$. By the observation before this lemma,
one sees that $j+1\neq 1$. Now we show that
$u_1,\ldots,u_j,u_m,u_{j+1}, \ldots,u_{m-1}$ is an admissible  order
which is obviously degree increasing.

We need to prove that $(u_1,\ldots,u_j):u_m$ and
$(u_1,\ldots,u_j,u_m,u_{j+1}, u_{p-1}):u_p$ are generated in degree
one, for $p=j+1,\ldots, m-1$. Since $\deg u_m <\deg u_q$ for
$q=j+1,\ldots, m-1$, we have $\deg (u_q:u_m) > 1$. Since
$u_1,\ldots, u_m$ is an admissible  order, for any $r\leq j$, there
exists a $k\leq j$ such that $\deg (u_k:u_m)=1$ and $u_k:u_m \mid
u_r:u_m$. This shows that $(u_1,\ldots,u_j):u_m$ is generated in
degree one. Now let $j+1\leq p\leq m-1$. It is clear that for any
$r\leq p-1$, there exists a $k\leq p-1$ such that $\deg (u_k:u_p)=1$
and $u_k:u_p\mid u_r:u_p$, since the ideal $(u_1,\ldots,u_j,u_{j+1},
\ldots,u_p)$ has linear quotients in this order. It remains to show
that there is an $h<p$ such that $\deg (u_h:u_p)=1$ and $u_h:u_p\mid
u_m:u_p$. Since $u_1,\ldots,u_j,u_{j+1},\ldots, u_m$ is an
admissible order and $\deg u_m<\deg u_q$ for $q=j+1,\ldots, m-1$,
there exists a $k\leq j$ such that $u_k:u_m=x_d$ and $x_d\mid
u_p:u_m$ for some $d\in [n]$. Since $u_1,\ldots,u_j,u_{j+1},
\ldots,u_p$ is an admissible  order, there exists an $h<p$ such that
$u_h:u_p=x_b$ and $x_b\mid u_k:u_p$ for some $b\in [n]$.

We claim  that $x_b\mid u_m:u_p$. In order to prove this we first
show that $b\neq d$.
 Suppose $b=d$. Then we have
$x_d=u_k:u_m$ and $x_d=x_b\mid u_k:u_p$. Hence $\deg_{x_d}
u_k=\deg_{x_d} u_m+1$ and $\deg_{x_d} u_k\geq \deg_{x_d} u_p+1$,
where by $\deg_{x_d} u$ we mean the degree of $x_d$ in $u$.
Therefore $\deg_{x_d} u_m\geq \deg_{x _d}u_p$, which is a
contradiction, since $x_d\mid u_p:u_m$.

Now  since $x_b=u_h:u_p$ and $x_b\mid u_k:u_p$, we have $\deg_{x_b}
u_h=\deg_{x_b} u_p+1$ and $\deg_{x_b} u_k\geq \deg_{x_b} u_p+1$. On
the other hand, since $x_d=u_k:u_m$ and $b\neq d$, we have
$\deg_{x_b} u_m\geq \deg_{x_b} u_k\geq \deg_{x_b} u_p+1 > \deg_{x_b}
u_p$. This implies that $x_b\mid u_m:u_p$.
\end{proof}

If $\sigma =u_1,\ldots, u_m$ is any admissible order of $G(I)$, we
denote by $\sigma'=u_{i_1},\ldots, u_{i_m}$  the degree increasing
admissible order derived from $\sigma$ as given in Lemma
\ref{increasing}. The order $\sigma'$ is called the degree
increasing admissible order induced by $\sigma$. Attached to an
admissible order $\sigma$ are the sets $q_{u,\sigma}(I)$ as defined
in the previous section. We have the following result.

\begin{Proposition}
\label{do not change} Let $I$ be a monomial ideal with linear
quotients with respect to the admissible order $\sigma$ of the generators.
Then for all $u\in G(I)$ we have
\[
q_{u,\sigma}(I)= q_{u,\sigma'}(I).
\]
\end{Proposition}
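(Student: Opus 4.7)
The plan is to argue by induction on $m = |G(I)|$, mirroring the recursive construction of $\sigma'$ in Lemma~\ref{increasing}. Write $\sigma = u_1, \ldots, u_m$. The construction of $\sigma'$ proceeds in two steps: first one applies the induction hypothesis to the admissible order $u_1, \ldots, u_{m-1}$ of $G(J)$, where $J = (u_1, \ldots, u_{m-1})$, obtaining a degree increasing rearrangement $\tau = \tau_1, \ldots, \tau_{m-1}$; then, if $\deg u_m < \deg \tau_{m-1}$, one moves $u_m$ earlier to position $j+1$, where $j+1$ is the smallest index with $\deg \tau_{j+1} > \deg u_m$. The first step already preserves every $q$-set: the predecessor set of each $\tau_i$ in $\tau$ equals, as a set, its predecessor set in $u_1, \ldots, u_{m-1}$ by the induction hypothesis, while $u_m$ has the common predecessor set $\{u_1, \ldots, u_{m-1}\}$ both in $\sigma$ and in the intermediate order $\tau_1, \ldots, \tau_{m-1}, u_m$. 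So it suffices to show that the single ``move'' operation producing $\sigma' = \tau_1, \ldots, \tau_j, u_m, \tau_{j+1}, \ldots, \tau_{m-1}$ preserves all $q$-sets.

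The reformulation I would use is that $x_t \in q_{v,\rho}(I)$ if and only if there exists $w$ preceding $v$ in $\rho$ with $w : v = x_t$; this follows from the linear quotient property of $\rho$ together with the fact that $v, w \in G(I)$. The verification then splits into three cases according to the position of $v$ in $\sigma'$. For $v = \tau_r$ with $r \leq j$ the predecessor set is unchanged, so there is nothing to check. For $v = u_m$ the predecessor set shrinks by the removal of $\tau_{j+1}, \ldots, \tau_{m-1}$; but any $\tau_r : u_m = x_t$ with $r \geq j+1$ would force $\deg \tau_r \leq \deg u_m + 1$ simultaneously with $\deg \tau_r > \deg u_m$ (by the choice of $j$), giving $\deg \tau_r = \deg u_m + 1$ and hence $u_m \mid \tau_r$, contradicting the minimality of $\tau_r \in G(I)$; so no removed element contributed a variable.

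The main obstacle is the third case, $v = \tau_p$ with $j+1 \leq p \leq m-1$, where $u_m$ becomes a new predecessor and I must rule out its contributing a fresh variable to $q_{\tau_p, \sigma'}(I)$. Since both $q$-sets are sets of variables, this reduces to showing that $u_m : \tau_p$ already lies in the ideal generated by $q_{\tau_p, \sigma}(I)$. I plan to import the argument from the proof of Lemma~\ref{increasing}: apply linear quotients of $\sigma$ at position $m$ to find $k < m$ with $u_k : u_m = x_c$ and $x_c \mid \tau_p : u_m$, where a degree argument of the form used in the previous case forces $k \leq j$; next apply linear quotients of $\sigma$ at position $p$ to obtain $h < p$ with $u_h : \tau_p = x_b$ and $x_b \mid u_k : \tau_p$; and finally verify $b \neq c$ and hence $x_b \mid u_m : \tau_p$, exactly as in Lemma~\ref{increasing}. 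Since $x_b \in q_{\tau_p, \sigma}(I)$ and $x_b \mid u_m : \tau_p$, the element $u_m : \tau_p$ indeed lies in the ideal generated by $q_{\tau_p, \sigma}(I)$, as required. The reverse inclusion of the two $q$-sets is immediate, since enlarging the set of predecessors can only enlarge the colon ideal.
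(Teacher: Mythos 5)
Your argument is correct, but it takes a genuinely different route from the paper's. You induct on $m$, unfolding the recursive construction of $\sigma'$ from Lemma~\ref{increasing} into two elementary steps (rearrange the first $m-1$ generators, then insert $u_m$ at position $j+1$) and check that each step preserves every $q$-set; the two nontrivial verifications --- that the generators skipped over by $u_m$ contribute no variable to its colon ideal, and that $u_m:\tau_p$ already lies in the old colon ideal for $p>j$ --- are exactly the degree computations already carried out inside the proof of Lemma~\ref{increasing}, so your proof essentially extracts the Proposition as a by-product of that lemma's proof. The paper instead argues directly on the final orders: the inclusion $q_{u,\sigma}(I)\subseteq q_{u,\sigma'}(I)$ follows because the rearrangement is a stable sort by degree (a witness $u_j$ with $u_j:u_k=x_d$ and $\deg u_j\le \deg u_k$ still precedes $u_k$ in $\sigma'$), and for the reverse inclusion the paper chooses a minimal witness $u_{i_j}$ in $\sigma'$, observes that $x_d\notin q_{u,\sigma}(I)$ forces $u_{i_j}$ to come after $u$ in $\sigma$ with strictly smaller degree, and reaches a contradiction with the minimality of $j$ by a fresh degree count. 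Your route buys economy through reuse of Lemma~\ref{increasing}; the paper's is self-contained at the level of the final orders but repeats some bookkeeping. Two small points to tighten in your write-up: in the first step it is the $q$-sets, not the sets of preceding generators, that are preserved (the predecessor set of $\tau_i$ genuinely changes under the rearrangement --- what the induction hypothesis gives is $q_{\tau_i,\tau}(J)=q_{\tau_i,(u_1,\ldots,u_{m-1})}(J)$); and the linear-quotient properties you invoke at positions $m$ and $p$ are those of the intermediate order $\tau_1,\ldots,\tau_{m-1},u_m$, whose admissibility should be recorded explicitly (it holds because $\tau$ is admissible for $J$ and $(\tau_1,\ldots,\tau_{m-1}):u_m=(u_1,\ldots,u_{m-1}):u_m$). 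Neither point affects the validity of the argument.
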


\begin{proof}
Let $\sigma=u_1,\ldots,u_m$ and $\sigma'=u_{i_1},\ldots,u_{i_m}$.
Suppose $u=u_k$ in  $\sigma$ and $u=u_{i_t}$
 in $\sigma'$. Let $x_d\in q_{u,\sigma}(I)$, for some $d\in [n]$,
 then there exists $j<k$ such that $u_j:u_k=x_d$. In particular,
 $\deg u_j\leq \deg u_k$. According to the definition of $\sigma'$,
 $u_j$ comes before $u_{i_t}$ and hence $x_d\in q_{u,\sigma'}(I)$.

Conversely,  let $x_d\in q_{u,\sigma'}(I)$ for some $d\in [n]$. Then
there exists an $i_j$ with $j<t$, such that $u_{i_j}:u_{i_t}=x_d$.
We may assume that $j$ is the smallest integer with this property
and $u_{i_j}=u_r$ in $\sigma$.

Suppose $x_d\not\in q_{u,\sigma}(I)$. Then $r>k$ and $\deg u_r<\deg
u_k$ according to the definition of $\sigma'$. Therefore $u_r=x_du$
and $u_k=wu$ where $u$ and $w$ are monomials with $\deg w\geq 2$ and
$x_d\nmid w$. Since $u_1,\ldots,u_r$ is an admissible order and
$k<r$, there exists an $s<r$ such that $u_s:u_r=x_b$ and $x_b\mid
u_k:u_r=w$ ($b\neq d$). Hence $\deg u_s\leq \deg u_r=\deg u_{i_j}$.
Therefore $u_s=u_{i_l}$ with $l<j$.

It follows that $\deg_{x_b} u_s=\deg_{x_b} u_r+1\leq \deg_{x_b}
u_k$, $\deg_{x_c} u_s\leq \deg_{x_c} u_r\leq \deg_{x_c} u_k$ for any
$c\neq d,b$, and $\deg_{x_d} u_s\leq\deg_{x_d} u_r=\deg_{x_d}
u_k+1$. If $\deg_{x_d} u_s<\deg_{x_d} u_k+1$, then we have $u_s\mid
u_k$, a contradiction. Therefore $\deg_{x_d} u_s=\deg_{x_d} u_k+1$,
and hence $x_d=u_s:u_k=u_{i_l}:u_{i_t}$, contradicting the choice of
$j$.
\end{proof}

 Let $\Delta$ be a simplicial complex with $\mathcal F(\Delta)=\{F_1,\ldots,F_m\}$.  Then
 $I_\Delta=\bigcap_{i=1}^m P_{F_i}$
where $P_{F_i}=(x_j\: j\not\in F_i)$, see \cite[Theorem 5.4.1]{BH}.
It follows from \cite[Lemma 1.2]{HHZ} that
$I_{\Delta^\vee}=(u_1,\ldots,u_m)$, where $u_i=\prod_{j\not\in F_i}
x_j$. We follow the notation in \cite{BW}: if
$\delta=F_1,\ldots,F_m$ is any order of facets of $\Delta$, then we
set $\Delta_k=\langle F_1,\ldots,F_k\rangle$ and
$R_\delta(F_k)=\{i\in F_k\: F_k-\{i\}\in \Delta_{k-1}\}$ for any
$k\in[m]$.

We observe the following simple but important fact: $\Delta$ is
shellable with shelling $\delta=F_1,\ldots, F_m$ if and only if
$I_{\Delta^\vee}$ has linear quotients with the admissible order
$\sigma=u_1,\ldots,u_m$. Moreover, if the equivalent conditions
hold, then $R_\delta(F_k)=q_{u_k,\sigma}(I_{\Delta^\vee}).$

\medskip
As an immediate consequence of Lemma \ref{increasing}, Proposition
\ref{do not change} and the observation  above we rediscover the
following well-known ``Rearrangement Lemma" of Bj\"orner and Wachs
\cite[Lemma 2.6]{BW}.
\begin{Corollary}
\label{shelling} Let $\delta=F_1,\ldots, F_m$  be a shelling of the
simplicial complex $\Delta$.
 There exists a shelling $\delta'=F_{i_1},\ldots, F_{i_m}$ of
$\Delta$ induced by $\delta$ such that $\dim F_{i_k}\geq \dim
F_{i_{k+1}}$ for $k=1,\ldots, m-1$. Furthermore we have
$R_\delta(F)=R_{\delta'}(F)$ for any facet $F$ of $\Delta$.
\end{Corollary}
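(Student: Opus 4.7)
The plan is to translate the shelling problem entirely into the language of monomial ideals with linear quotients, and then invoke Lemma~\ref{increasing} and Proposition~\ref{do not change} directly. The excerpt already records the dictionary we need: a shelling $\delta=F_1,\ldots,F_m$ of $\Delta$ corresponds bijectively to an admissible order $\sigma=u_1,\ldots,u_m$ of the generators of $I_{\Delta^\vee}$ via $u_i=\prod_{j\notin F_i}x_j$, and under this correspondence $R_\delta(F_k)=q_{u_k,\sigma}(I_{\Delta^\vee})$. So the entire statement should fall out of rearranging $\sigma$.

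First I would observe the key numerical identity: $\deg u_i = n - |F_i| = n - 1 - \dim F_i$. Consequently a degree increasing order on the $u_i$ is exactly a dimension decreasing order on the $F_i$. This converts the condition $\dim F_{i_k}\ge \dim F_{i_{k+1}}$ in the corollary into the condition $\deg u_{i_k}\le \deg u_{i_{k+1}}$ used in Lemma~\ref{increasing}.

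Next I would apply Lemma~\ref{increasing} to the admissible order $\sigma$ associated to $\delta$: it produces a degree increasing admissible order $\sigma'=u_{i_1},\ldots,u_{i_m}$ induced by $\sigma$. By the dictionary above, $\sigma'$ in turn gives a shelling $\delta'=F_{i_1},\ldots,F_{i_m}$ of $\Delta$, and by the numerical identity this shelling satisfies $\dim F_{i_k}\ge \dim F_{i_{k+1}}$ for $k=1,\ldots,m-1$. Finally, Proposition~\ref{do not change} yields $q_{u,\sigma}(I_{\Delta^\vee})=q_{u,\sigma'}(I_{\Delta^\vee})$ for every $u\in G(I_{\Delta^\vee})$, which via the identification $R_\delta(F_k)=q_{u_k,\sigma}(I_{\Delta^\vee})$ translates to $R_\delta(F)=R_{\delta'}(F)$ for every facet $F$ of $\Delta$.

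There is essentially no obstacle; the whole point is that the Björner--Wachs rearrangement lemma is the Alexander-dual reflection of our more general Lemma~\ref{increasing} together with Proposition~\ref{do not change}. The only minor care needed is in checking the degree-versus-dimension translation and in verifying that the shelling property for the reordered facets is equivalent to the admissibility of the reordered $u_i$, but both of these are already recorded in the observation immediately preceding the corollary and need only be cited.
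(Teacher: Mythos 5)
Your proposal is correct and is exactly the paper's argument: the corollary is stated there as an immediate consequence of Lemma~\ref{increasing}, Proposition~\ref{do not change}, and the observation that a shelling $\delta$ of $\Delta$ corresponds to an admissible order $\sigma$ of $G(I_{\Delta^\vee})$ with $R_\delta(F_k)=q_{u_k,\sigma}(I_{\Delta^\vee})$. Your explicit remark that $\deg u_i=n-1-\dim F_i$, so degree increasing dualizes to dimension decreasing, is the only detail the paper leaves implicit.
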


It is known that the product of two ideals with linear quotients
need not to have again linear quotients, even if one of them is
generated by linear forms. Such an example was given by Conca and
Herzog \cite{CH}.

\begin{Example}
\label{concaherzog} {\em Let $R=k[a,b,c,d]$, $I=(b,c)$ and
$J=(a^2b,abc,bcd,cd^2)$. Then $J$ has linear quotients, and $I$ is
generated by a subset of the variables. But the product $IJ$ has no
linear quotients (not even a linear resolution). }
\end{Example}

However, we have the following

\begin{Lemma}
\label{product} Let $I\subset S$ be a monomial ideal. If $I$ has
linear quotients, then $\mm I$ has linear quotients, where
$\mm=(x_1,\ldots,x_n)$ is the graded maximal ideal of $S$.
\end{Lemma}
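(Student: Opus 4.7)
The plan is to build an admissible order on $G(\mm I)$ out of a given one on $G(I)$. First, by Lemma~\ref{increasing} I may replace the admissible order $\sigma=u_1,\ldots,u_m$ of $G(I)$ by one that is also degree-increasing. For each $w\in G(\mm I)$, let $i(w)$ be the smallest index with $u_{i(w)}\mid w$; then $w/u_{i(w)}$ is forced to be a single variable $x_{j(w)}$, for otherwise any $x_a\mid w/u_{i(w)}$ would give a proper divisor $x_au_{i(w)}$ of $w$ lying in $\mm I$, violating minimality. I will order $G(\mm I)$ lexicographically by the pair $(i(w),j(w))$ and verify admissibility of this order.

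Fix $w=x_ju_i$ in $G(\mm I)$ (with $i=i(w),\ j=j(w)$) and an earlier generator $w'=x_{j'}u_{i'}$. The easy subcase is $i'=i$ and $j'<j$: then $w':w=x_{j'}$ directly, and I can take $w''=w'$. The substantive subcase is $i'<i$. Invoking the linear quotients of $I$, I pick $k<i$ and a variable $x_t$ with $u_k:u_i=x_t$ and $x_t\mid u_{i'}:u_i$. The first main obstacle is the subclaim $j\neq t$: if $j=t$, an exponent comparison gives $u_k\mid w$, and since no minimal generator of $I$ can lie in $\mm I$ (a short argument from minimality), $w$ would have to equal $x_au_k$ for a single variable $x_a$ with $a\neq t$, whence $i(w)\leq k<i$, contradicting $i(w)=i$. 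Granted $j\neq t$, the relation $x_t\mid w':w$ follows from a routine computation of exponents at $x_t$.

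It remains to produce an earlier $w''\in G(\mm I)$ with $w'':w=x_t$. The natural candidate is $g:=x_ju_k\in\mm I$; a short colon computation gives $g:w=x_t$. If $g\in G(\mm I)$, set $w''=g$, and note $i(g)\leq k<i$. The second main obstacle is the case $g\notin G(\mm I)$, and this is where the degree-increasing hypothesis is used decisively: any $w''\in G(\mm I)$ that properly divides $g$ satisfies $\deg w''<\deg g=1+\deg u_k$, so $\deg u_{i(w'')}<\deg u_k$, and the degree-increasing order forces $i(w'')<k<i$, giving $w''<_\tau w$. In either sub-case, from $w''\mid g$, together with $\deg_{x_s}(g)\leq\deg_{x_s}(w)$ for $s\neq t$ and $\deg_{x_t}(g)=\deg_{x_t}(w)+1$ (using $j\neq t$), one sees $w'':w$ is supported only at $x_t$ with exponent $0$ or $1$; the exponent $0$ would mean $w''\mid w$ and hence $w''=w$, which would force $u_i\mid u_k$ and violate minimality of $G(I)$. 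Hence $w'':w=x_t$, and since $x_t\mid w':w$, the admissibility condition is verified, completing the proof.
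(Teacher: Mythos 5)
Your proof is correct and follows essentially the same route as the paper's: the same lexicographic order on $G(\mm I)$ via the minimal-index representation $w=x_{j(w)}u_{i(w)}$, the same lifting of the witness $u_k$ with $u_k:u_i=x_t$, the same key points ($t\neq j$, and replacing $x_ju_k$ by a minimal generator of $\mm I$ dividing it). The only cosmetic differences are that you verify admissibility directly for all generators using the degree-increasing order where the paper inducts on $m$ and only checks the last block, and your argument for $t\neq j$ (via $u_k\mid w$ contradicting minimality of $i(w)$) is a clean variant of the paper's degree computation.
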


\begin{proof}
We may assume $G(I)=\{u_1,\ldots, u_m\}$ and $u_1,\ldots, u_m$ is a degree increasing
admissible order. We prove the assertion by using induction on $m$.

The case $m=1$  is trivial. Let $m>1$. Consider the multi-set
$$T=\{u_1x_1,\ldots,u_1x_n,u_2x_1,\ldots,u_2x_n,\ldots,
u_mx_1,\ldots, u_mx_n\}.$$ It is a  system of generator of $\mm
I$. If $u_ix_j\mid u_rx_s$ for some $i<r$, then we remove $u_rx_s$
from $T$. In this way, we get the minimal set
 $$T'=\{u_ix_j\:i=1,\ldots,m, j\in A_i\}$$
of monomial generators of $\mm I$, where $A_1=[n]$ and
$A_i\subset[n]$ for $i=2,\ldots, m$. We shall order $G(\mm I)$ in
the following way:  $u_kx_l$ comes before $u_tx_s$ if $k<t$ or $k=t$
and $l<s$. Now we show that the above order $\sigma$ of $G(\mm I)$
is an admissible  order. We define the order of the generators of
$\mm (u_1\ldots,u_{m-1})$  in the same way as we did for $\mm I$.
Then the ordered sequence $\tau$ of the generators of $\mm
(u_1\ldots,u_{m-1})$ is an initial sequence of $\sigma$. Moreover,
by induction hypothesis, $\tau$ is an admissible order of $G(\mm
(u_1\ldots,u_{m-1}))$.

For a given  $j\in A_m$  let $J$ be the ideal generated by all monomials
in $T'$ which come before $u_mx_j$ with respect to $\sigma$. It remains to be shown that $J:u_mx_j$
is generated by monomials of degree 1.

Let $u_kx_l\in G(J)$. If $k=m$, then
$u_kx_{l}:u_mx_{j}=x_{l}$.
If $k<m$, then we shall find an element  $u_rx_s\in G(J)$ and $t\in [n]$ such that
$u_rx_{s}:u_mx_{j}=x_t$ and $x_t\mid u_kx_{l}:u_mx_{j}$. Indeed since
$u_1,\ldots, u_m$ is an admissible  order of $G(I)$, there exists $q<m$ such
that $u_q:u_m=x_t$  and $x_t\mid u_k:u_m$. This implies that
$u_qx_{j}:u_mx_{j}=u_q:u_m=x_1$. Since $u_qx_{j}\in \mm
I$,  there exists, by the definition of $\sigma$, a monomial  $u_rx_{s}\in G(J)$  such that
$u_rx_{s}\mid u_qx_{j}$.

We claim that  $u_rx_s:u_mx_{j}=x_t$ and $x_t\mid u_kx_{l}:u_mx_{j}$.
Notice that $u_rx_s:u_mx_{j}\mid u_qx_{j}:u_mx_{j}=x_t$. If
$u_rx_s:u_mx_{j}\neq x_t$, then $u_rx_s:u_mx_{j}=1$, that is,
$u_rx_s\mid u_mx_j$ which contradicts the fact that $j\in A_m$. This shows that  $u_rx_s:u_mx_{j}=x_t$.

 Since $x_t\mid u_k:u_m$, it is enough to
show that $x_t\neq x_{j}$ in order to prove that $x_t\mid
u_kx_{l}:u_mx_{j}$. Assume that $x_t=x_{j}$. Since $u_q:u_m=x_t$, we
have $u_q=x_tu$ for some monomial $u$ such that $u\mid u_m$. Since
$\deg u_q\leq\deg u_m$, it follows that  $u_m=uw$ for some monomial
$w$ with $\deg w\geq 1$ and $x_t\nmid w$. Hence there exists some
variable $x_d$ with $d\neq t$ such that $x_d\mid w$. But then
$x_du_q=x_dux_t\mid wux_t=u_mx_{j}$, contradicting again the fact
that $j\in A_m$.
\end{proof}

\begin{Remark}
{\em The converse of the above lemma is not true. For example, let
$I=(ab,cd)\subset K[a,b,c,d]$. Then $\mm
I=(a^2b,ab^2,abc,abd,acd,bcd,c^2d,cd^2)$ has linear quotients in the
given order, but $I$ has no linear quotients.}
\end{Remark}

  Now we present the main theorem of this section.

\begin{Theorem}
\label{linear} Let $I\subset S$ be a monomial ideal. If $I$ has
linear quotients, then $I$ has componentwise linear quotients.
\end{Theorem}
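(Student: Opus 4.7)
The plan is to induct on $d$ and show that $I_{\langle d\rangle}$ has linear quotients for every degree $d$. By Lemma \ref{increasing} I may replace the given admissible order $\sigma=u_1,\ldots,u_m$ of $G(I)$ by a degree-increasing one. The base of the induction is easy: setting $d_0:=\min_i\deg u_i$, for $d<d_0$ the component $I_{\langle d\rangle}$ is zero, and for $d=d_0$ it is generated by the initial segment of $\sigma$ consisting of all generators of minimum degree, so the restricted order remains admissible.

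For the inductive step, the key identity is
\[
I_{\langle d\rangle}=\mm I_{\langle d-1\rangle}+K,\qquad K:=\bigl(u\in G(I)\: \deg u=d\bigr),
\]
which holds because any degree-$d$ monomial in $I$ is either a minimal generator of $I$ or of the form $x_j w$ with $w\in I_{\langle d-1\rangle}$. By the inductive hypothesis $I_{\langle d-1\rangle}$ has linear quotients, and then Lemma \ref{product} produces an admissible order $\tau$ of $G(\mm I_{\langle d-1\rangle})$. A crucial observation is that $G(\mm I_{\langle d-1\rangle})$ and $\{u\in G(I)\: \deg u=d\}$ are disjoint, since a degree-$d$ generator of $I$ inside $\mm I_{\langle d-1\rangle}$ would admit a proper divisor in $I$, contradicting its minimality. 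Consequently $G(I_{\langle d\rangle})=G(\mm I_{\langle d-1\rangle})\sqcup\{u_{j_1},\ldots,u_{j_s}\}$, where $u_{j_1},\ldots,u_{j_s}$ are the degree-$d$ generators of $I$ listed in the order inherited from $\sigma$. I order $G(I_{\langle d\rangle})$ by listing $\tau$ first and then appending $u_{j_1},\ldots,u_{j_s}$.

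Admissibility inside $\tau$ is automatic, so only the colon at each $u_{j_t}$ must be verified. Since $\sigma$ is degree increasing, $\mm I_{\langle d-1\rangle}+(u_{j_1},\ldots,u_{j_{t-1}})\subseteq(u_1,\ldots,u_{j_t-1})$, and hence the colon with $u_{j_t}$ is contained in $(u_1,\ldots,u_{j_t-1}):u_{j_t}$, which by admissibility of $\sigma$ is generated by certain variables $x_{c_1},\ldots,x_{c_p}$. The main obstacle is the reverse inclusion: showing that each $x_{c_l}$ actually lies in our colon. If the witness $u_k$ with $u_k:u_{j_t}=x_{c_l}$ has degree $d$ then $u_k$ is one of the earlier $u_{j_r}$ and the claim is immediate; if $\deg u_k<d$ I manufacture a witness $f'\in\mm I_{\langle d-1\rangle}$ by writing $u_k=x_{c_l}g$ with $g=\gcd(u_k,u_{j_t})$, noting that $u_{j_t}/g$ has degree $d-\deg u_k+1\ge 2$ and is not divisible by $x_{c_l}$, factoring $u_{j_t}/g=x_r h$ with $x_r\neq x_{c_l}$ and $\deg h\ge 1$, and setting $f':=u_k h$. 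A direct computation then gives $f':u_{j_t}=x_{c_l}$, and $\deg h\ge 1$ puts $f'$ into $\mm I_{\langle d-1\rangle}$. The degree-increasing hypothesis on $\sigma$ is essential precisely here: it produces the gap $\deg u_{j_t}-\deg u_k\ge 1$ that creates the extra factor needed to build $f'$.
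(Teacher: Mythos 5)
Your proof is correct and follows essentially the same route as the paper: both use Lemma \ref{increasing} to pass to a degree-increasing admissible order, decompose $I_{\langle d\rangle}$ as $\mm I_{\langle d-1\rangle}$ plus the degree-$d$ minimal generators, invoke Lemma \ref{product} for the first summand, and lift a lower-degree colon witness $u_k$ into $\mm I_{\langle d-1\rangle}$ by multiplying by a suitable monomial supported in $u_{j_t}/\gcd(u_k,u_{j_t})$ and avoiding $x_{c_l}$. The only organizational difference is that you induct on the degree directly (so your lifting monomial $h$ may have degree $>1$), while the paper first reduces to generators in two consecutive degrees and lifts by a single variable; the substance is the same.
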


\begin{proof}
By Lemma \ref{product} and Lemma \ref{increasing}, we may assume
that $I$ is generated by monomials of two different degrees $a$ and
$a+1$. We denote by $I_{\langle a\rangle}$  the ideal generated by the
$a$-th graded component of the ideal $I$. Let $G(I)=\{u_1,\ldots,u_s,v_1,\ldots,v_t\}$,
where $\deg u_i=a$ for $i=1,\ldots, s$ and $\deg v_j=a+1$ for
$j=1,\ldots, t$. By Lemma \ref{increasing}, we may assume that
$u_1,\ldots,u_s,v_1,\ldots,v_t$ is an admissible order, hence $I_a$
has linear quotients. Now we show that $I_{\langle a+1\rangle}$ has also linear
quotients.

We have $I_{\langle a+1\rangle}=\mm (u_1,\ldots,u_s) + (v_1,\ldots,v_t)$. Let
$G(I_{\langle a+1\rangle})=\{w_1,\ldots,w_l,v_1,\ldots,v_t\}$, where $w_1,\ldots,w_l$ is
ordered as in Lemma \ref{product}.  In particular, $w_1,\ldots,w_l$ is an admissible order. We only need
to show that $(w_1,\ldots,w_l,v_1,\ldots,v_{p-1}):v_p$ is generated
by a subset of the variables, for $1\leq p\leq t$.

First  we consider $v_j:v_p$ where $j<p$. Since
$u_1,\ldots,u_s,v_1,\ldots,v_t$ is an admissible order of $G(I)$,
there exists some $u\in\{u_1,\ldots,u_s,v_1,\ldots,v_t\}$ and $d\in
[n]$ such that $u:v_{p}=x_d$ and $x_d\mid v_j:v_{p}$. If
$u\in\{v_1,\ldots,v_t\}$ we are done. So we may assume
$u\in\{u_1,\ldots,u_s\}$. Therefore,  $\deg u=\deg v_{p}-1$. Since
$u:v_{p}=x_d$, $\deg_{x_d} u=\deg_{x_d} v_{p}+1$ and $\deg_{x_b}
u\leq\deg_{x_b} v_p$ for any $b\neq d$. Since $\deg u<\deg v_p$,
there exists a variable $x_c$ with $c\neq d$ such that $\deg_{x_c} u
\leq\deg_{x_c} v_p-1$. Since  $x_cu\in\mm I_{\langle a\rangle}$, one
has $x_cu=w_k$ for some $k\leq l$. All this  implies that
$\deg_{x_d}w_k=\deg_{x_d}u=\deg_{x_d} v_p+1$ and $\deg_{x_b}
w_k\leq\deg_{x_b} v_p$ for any $b\neq d$. Therefore $w_k:v_p=x_d$
and $x_d\mid v_j:v_p$.

It remains to consider $w_j: v_p$. In this case $w_j=x_bu_{i}$ for some
$i \in [s]$  and some $b\in [n]$. Since
$u_1,\ldots,u_s,v_1,\ldots,v_t$ is an admissible order, there exists
some $u\in\{u_1,\ldots,u_s,v_1,\ldots,v_t\}$ and $d\in [n]$ such that
$u:v_p=x_d$ and $x_d\mid u_{i}:v_p$.
Therefore $x_d\mid w_j:v_p$, since $u_{i}:v_p\mid
w_j:v_p$. If $u\in\{v_1,\ldots,v_t\}$, then we are done. So we
may assume $u\in\{u_1,\ldots,u_s\}$. Then, as before, there
exists a variable $x_c$ with $c\neq d$ such that $x_cu\in\mm I_{\langle a\rangle}$,
$\deg_{x_d} x_cu=\deg_{x_d} u=\deg_{x_d}v_p+1$ and $\deg_{x_b}
x_cu\leq\deg_{x_b} v_p$ for any $b\neq d$. This implies that
$x_cu:v_p=x_d$ and $x_d\mid w_j:v_p$.
\end{proof}

\begin{Corollary}
\label{complinear} If $I\subset S$ is a monomial ideal with linear
quotients, then $I$ is componentwise linear.
\end{Corollary}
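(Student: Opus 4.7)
The plan is essentially a one-line deduction from Theorem~\ref{linear} combined with the standard fact, already recalled in the preliminaries, that a monomial ideal with linear quotients which is generated in a single degree has a linear resolution.

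First I would recall the definition: a graded ideal $I\subset S$ is called \emph{componentwise linear} if for every integer $k$ the ideal $I_{\langle k\rangle}$ generated by the $k$-th graded component of $I$ has a linear resolution. So the task reduces to verifying that every component $I_{\langle k\rangle}$ has a linear resolution.

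Next I would invoke Theorem~\ref{linear}: since $I$ has linear quotients, it has componentwise linear quotients, meaning that for each $k$ the ideal $I_{\langle k\rangle}$ itself has linear quotients. Because $I_{\langle k\rangle}$ is by construction generated in the single degree $k$, the cited fact (with reference \cite{Zh}) that an equigenerated monomial ideal with linear quotients admits a linear resolution applies directly to give that $I_{\langle k\rangle}$ has a linear resolution. This holds for every $k$, so $I$ is componentwise linear.

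There is no real obstacle here: the substantive work has already been done in Theorem~\ref{linear}, and the corollary is just the translation of ``componentwise linear quotients'' into ``componentwise linear'' via the equigenerated-with-linear-quotients criterion. The only thing to be careful about when writing the proof is to state clearly that the implication ``linear quotients in one degree $\Longrightarrow$ linear resolution'' is what bridges Theorem~\ref{linear} and the conclusion.
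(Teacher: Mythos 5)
Your proof is correct and is exactly the argument the paper intends (the corollary is stated without proof precisely because it follows from Theorem~\ref{linear} together with the fact, recalled in the preliminaries with reference to \cite{Zh}, that an equigenerated monomial ideal with linear quotients has a linear resolution). No issues.
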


We do not know if the converse of Theorem \ref{linear} is true in
general. However we could prove the following:

\begin{Proposition}
\label{noname} Let $I$ be a monomial ideal with componentwise linear
quotients. Suppose for each component $I_{\langle a\rangle}$ there
exists an admissible order $\sigma_a$ of $G(I_{\langle a\rangle})$
with the property that the elements of $G(\mm I_{\langle
a-1\rangle})$ form the initial part of $\sigma_a$. Then $I$ has
linear quotients.
\end{Proposition}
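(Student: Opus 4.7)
The plan is to construct an admissible order $\sigma$ of $G(I)$ by splicing together the given orders $\sigma_a$ along the degree filtration. The key preliminary observation is that for each generating degree $a$ one has the disjoint union
\[
G(I_{\langle a\rangle}) \;=\; G(\mm I_{\langle a-1\rangle}) \;\sqcup\; \{u\in G(I):\deg u = a\},
\]
since any degree-$a$ element of $\mm I_{\langle a-1\rangle}$ is divisible by a strictly smaller generator of $I$ and so cannot itself belong to $G(I)$. Combined with the hypothesis, this forces $\sigma_a$ to have the shape $w_1,\dots,w_q,v_1,\dots,v_r$ with $\{w_l\}=G(\mm I_{\langle a-1\rangle})$ and $\{v_i\}$ exactly the degree-$a$ part of $G(I)$. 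Writing $\pi_a := v_1,\dots,v_r$ and letting $d_1<\cdots<d_k$ be the generating degrees of $I$, set $\sigma := \pi_{d_1},\pi_{d_2},\dots,\pi_{d_k}$.

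To verify admissibility of $\sigma$, fix $u_p = v_s$ in $\pi_a$ and an earlier element $u'$ of $\sigma$; the task is to produce an even earlier element $u''$ and a variable $x_t$ with $u'':u_p = x_t$ and $x_t\mid u':u_p$. If $\deg u' = a$, so $u' = v_{j'}$ with $j' < s$, apply admissibility of $\sigma_a$ directly to $v_{j'}$ and $v_s$ to obtain some $z$ preceding $v_s$ in $\sigma_a$ with $z:v_s = x_t$ and $x_t\mid v_{j'}:v_s$. If $z = v_{j''}\in\pi_a$, take $u''=z$. If instead $z = w_l\in G(\mm I_{\langle a-1\rangle})$, write $w_l = x_c f$ with $f\in I_{\langle a-1\rangle}$ and choose any $u^*\in G(I)$ of degree $<a$ dividing $f$; then $u^*\mid w_l$, so $u^*:v_s$ divides $w_l:v_s = x_t$, and since $u^*\nmid v_s$ (as $v_s\in G(I)$ has strictly greater degree) this divisor is nontrivial and equals $x_t$. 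Take $u''=u^*$, which sits in $\pi_{d_i}$ for some $d_i<a$, hence before $u_p$ in $\sigma$.

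When $\deg u' = b < a$, lift $u'$ to degree $a$ before applying admissibility. Since $v_s\in G(I)$, one has $u'\nmid v_s$, so $\deg\gcd(u',v_s) < b$ and $\deg(v_s/\gcd(u',v_s)) > a-b$. Pick any monomial $m \mid v_s/\gcd(u',v_s)$ of degree $a-b$ and set $\tilde u' := u'm$. Then $\tilde u'$ is a degree-$a$ monomial in $I$ strictly divisible by $u'$, so $\tilde u'\in G(\mm I_{\langle a-1\rangle})$ and lies in the initial segment of $\sigma_a$, in particular before $v_s$. The constraint $m\mid v_s/\gcd(u',v_s)$ gives $\deg_x m \leq \max(0, \deg_x v_s - \deg_x u')$ for every $x$, from which it follows that every variable dividing $\tilde u':v_s$ already divides $u':v_s$. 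Apply admissibility of $\sigma_a$ to $\tilde u'$ and $v_s$ and extract $u''$ by the same dichotomy as in the previous case: the resulting $x_t$ divides $\tilde u':v_s$, hence $x_t\mid u':u_p$, as required.

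The heart of the argument is the lift in the lower-degree case; the delicate point is that the witness variable $x_t$ produced by $\sigma_a$-admissibility must genuinely divide $u':u_p$ rather than being contributed by the multiplier $m$. The choice $m\mid v_s/\gcd(u',v_s)$ is tailored precisely to rule this out, while the hypothesis that $G(\mm I_{\langle a-1\rangle})$ forms the initial segment of $\sigma_a$ is used exactly to guarantee that the lift $\tilde u'$ is already available before $v_s$ in that admissible order.
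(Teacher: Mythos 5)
Your proof is correct and follows essentially the same route as the paper's: order $G(I)$ degree by degree using the tails of the orders $\sigma_a$, lift a lower-degree generator $u'$ to a degree-$a$ multiple $mu'$ with $mu':u_p=u':u_p$ lying in the initial segment $G(\mm I_{\langle a-1\rangle})$ of $\sigma_a$, and then descend from the witness supplied by admissibility of $\sigma_a$ back to a genuine generator of $I$ of smaller degree. The only differences are expository: you make explicit the decomposition of $G(I_{\langle a\rangle})$ and the existence of the multiplier $m$, which the paper leaves implicit.
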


\begin{proof} We chose the order $\sigma  =u_1,\ldots,u_s $ of $G(I)$  such that  that $i<j$ if
$\deg u_i<\deg u_{j}$ or $\deg u_i=\deg u_{j}=a$ and $u_i$ comes before $u_j$ in  $\sigma_a$.

We show that $(u_1,\ldots,u_{p-1}):u_p$ is generated by linear
forms. If $\deg u_1=\deg u_p$, then there is nothing to prove.

Now assume that $\deg u_1 <\deg u_p=b$. Let $l<p$ be the largest number such that
$\deg u_l< b$.  Then, by our assumption,  there exists an admissible order
$w_1,\ldots, w_t, u_{l+1},\ldots,u_p$ where $w_1,\ldots, w_t\in G(\mm I_{\langle b-1\rangle})$.

Let $j<p$ and suppose that  $\deg(u_j:u_p)\geq 2$. Let $m$ be a
monomial such that $\deg (mu_j)=\deg u_p$  and $mu_j:u_p=u_j:u_p$.
Since $mu_j\in\{w_1,\ldots, w_t, u_{l+1},\ldots,u_{p-1}\}$  there exists $w\in
\{w_1,\ldots, w_t, u_{l+1},\ldots,u_{p-1}\}$ and some $d\in [n]$ such that
$w:u_p=x_d$ and $x_d\mid u_j:u_p$ because $mu_j:u_p =u_j:u_p$.

If $w\in \{u_{l+1},\ldots,u_{p-1}\}$, then we are done. On the other
hand, if $w\in \{w_1,\ldots, w_t\}$, then $w=m'u_i$ for some $i\leq
l$ and some monomial $m'$. Since $w:u_p=x_d$, one has $\deg_{x_b}
w\leq\deg_{x_b} u_{p}$ for all $b\neq d$. Hence $x_d$ does not
divide  $m'$, otherwise $u_i\mid u_p$ which contradicts the fact
that $u_i, u_p\in G(I)$. Therefore $x_d=u_i:u_p$ and $x_d\mid
u_j:u_p$.
\end{proof}

Let $I\subset S$ be a monomial ideal. We denote by $I^*$ the
monomial ideal generated by the squarefree monomials in $I$ and call
it the squarefree part of $I$.  Indeed $I^*=(u\: u\in G(I) \text{
and $u$ is squarefree})$. We follow \cite{HH} and denote by
$I_{[a]}$ the squarefree part of $I_{\langle a\rangle}$. In
\cite[Proposition 1.5]{HH}, the authors proved that if $I$ is
squarefree, then $I_{\langle a\rangle}$ has a linear resolution if
and only if $I_{[a]}$ has a linear resolution. Indeed for the ``only
if'' part one does not need the assumption that $I$ is squarefree.
We have the following slightly different result.

\begin{Proposition}
\label{sfpart} Let $I$ be a monomial ideal in $S$. If $I$ has linear
quotients, then $I^*$ has linear quotients.
\end{Proposition}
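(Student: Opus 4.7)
The plan is to take any admissible order $\sigma = u_1,\ldots,u_m$ of $G(I)$ and show that the subsequence obtained by deleting the non-squarefree entries is an admissible order of $G(I^*)$. First I would note that the squarefree members of $G(I)$ really do constitute $G(I^*)$: they generate $I^*$ by the definition recalled in the paper, and since they already form part of a minimal generating set of $I$ there can be no divisibility among them, so they form the unique minimal generating set of $I^*$.

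Let $v_1,\ldots,v_k$ denote the squarefree elements of $G(I)$ in the order inherited from $\sigma$, so $v_r = u_{i_r}$ with $i_1 < i_2 < \cdots < i_k$. To verify admissibility I would fix indices $r < \ell$ in $\{1,\ldots,k\}$ and hunt for an index $r' < \ell$ together with a variable $x_t$ such that $v_{r'} : v_\ell = x_t$ and $x_t \mid v_r : v_\ell$. Since $\sigma$ is admissible and $i_r < i_\ell$, there exist some $q < i_\ell$ and some variable $x_t$ with $u_q : u_{i_\ell} = x_t$ and $x_t \mid u_{i_r} : u_{i_\ell}$. All that is then needed is that $u_q$ itself be squarefree, because this forces $u_q = v_{r'}$ for some $r' < \ell$.

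The crux is exactly the squarefreeness of $u_q$, and this is a short degree computation. Because $v_r = u_{i_r}$ is squarefree and $x_t$ divides $u_{i_r} : u_{i_\ell}$, one gets $\deg_{x_t} u_{i_r} = 1$ and $\deg_{x_t} u_{i_\ell} = 0$. Combining $\deg_{x_t} u_{i_\ell} = 0$ with $u_q : u_{i_\ell} = x_t$ gives $\deg_{x_t} u_q = 1$, and for every $x_s$ with $s \neq t$ the relation $u_q : u_{i_\ell} = x_t$ also yields $\deg_{x_s} u_q \leq \deg_{x_s} u_{i_\ell} \leq 1$, the last inequality by squarefreeness of $u_{i_\ell} = v_\ell$. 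Hence $u_q$ is squarefree, which is exactly what was required.

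There is no serious obstacle here; the argument is essentially this one bookkeeping step. The only thing to be a little careful about is not to confuse the two uses of squarefreeness (of $v_r$ to pin down $\deg_{x_t}$, and of $v_\ell$ to bound the remaining exponents of $u_q$); once both are used, $u_q$ falls into the subsequence $v_1,\ldots,v_k$ automatically and the admissibility of the induced order follows with no further work.
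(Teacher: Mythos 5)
Your proposal is correct and follows essentially the same route as the paper: pass to the subsequence of squarefree generators in the inherited order and, given the witness $u_q$ with $u_q:u_{i_\ell}=x_t$ supplied by admissibility of $\sigma$, use the squarefreeness of $v_r$ and $v_\ell$ to force $\deg_{x_t}u_q=1$ and $\deg_{x_s}u_q\le 1$ for $s\ne t$, so that $u_q$ itself lies in $G(I^*)$. The only cosmetic difference is that the paper wraps this computation in an induction on $m$, whereas you verify the quotient condition for all pairs directly, which is if anything slightly cleaner.
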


\begin{proof}
Let $u_1,\ldots,u_m$ be an admissible order of $G(I)$. Assume
$I^*=(u_{i_1},\ldots,u_{i_t})$, where $1\leq i_1<i_2<\cdots <i_t\leq
m$. We shall show $u_{i_1},\ldots,u_{i_t}$ is an admissible order of
$G(I^*)$ by using induction on $m$.

The case $m=1$ is trivial. Now assume $m>1$. It is clear that
$(u_{i_1},\ldots,u_{i_{t-1}})$ is the squarefree part of the
monomial ideal $(u_1,\ldots,u_{i_{t-1}})$, where
$u_1,\ldots,u_{i_{t-1}}$ is an admissible order. By induction
hypothesis $u_{i_1},\ldots,u_{i_{t-1}}$ is an admissible order of
$G((u_{i_1},\ldots,u_{i_{t-1}}))$. Consider $u_{i_j}:u_{i_t}$ with
$j<t$. Since $u_1,\ldots,u_m$ is an admissible order of $G(I)$,
there exists $k<i_t$ and some $d\in [n]$ such that $u_k:u_{i_t}=x_d$
and $x_d\mid u_{i_j}:u_{i_t}$.  Since $u_{i_j}$ and $u_{i_t}$ are
squarefree, we have $x_d\nmid u_{i_t}$. On the other hand, since
$u_k:u_{i_t}=x_d$, one has $\deg_{x_d} u_k=1$ and $\deg_{x_b}
u_k\leq \deg_{x_b} u_{i_t}\leq 1$ for any  $b\neq d$. Hence $u_k\in
G(I^*)$.
\end{proof}

Combining Proposition \ref{sfpart} with Theorem \ref{linear}, we
obtain:
\begin{Corollary}
\label{sqfree} Let $I\subset S$ be a monomial ideal with linear
quotients. Then $I_{[a]}$ has linear quotients for all $a$.
\end{Corollary}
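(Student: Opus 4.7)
The plan is to combine the two preceding results directly. First I would apply Theorem \ref{linear} to $I$: since $I$ has linear quotients, it has componentwise linear quotients, which by definition means that every component $I_{\langle a\rangle}$ has linear quotients.

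Next, fix $a$ and consider the ideal $J = I_{\langle a\rangle}$. By the previous step, $J$ has linear quotients. Now I would invoke Proposition \ref{sfpart} applied to $J$: its squarefree part $J^{*}$ has linear quotients. Finally, I would observe that by the definition of $I_{[a]}$ in the paragraph preceding Proposition \ref{sfpart}, we have $I_{[a]} = (I_{\langle a\rangle})^{*} = J^{*}$, so $I_{[a]}$ has linear quotients, as desired.

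There is no real obstacle here; the corollary is essentially a one-line composition of Theorem \ref{linear} and Proposition \ref{sfpart}. The only point worth a sentence of care is unwinding the notation to be sure that ``squarefree part of the $a$-th component'' coincides with applying the $(-)^{*}$ operation to the single-degree ideal $I_{\langle a\rangle}$, so that Proposition \ref{sfpart} is indeed applicable to $I_{\langle a\rangle}$ (which it is, since that proposition is stated for arbitrary monomial ideals).
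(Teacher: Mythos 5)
Your proposal is correct and is exactly the paper's argument: the corollary is obtained by combining Theorem \ref{linear} (each component $I_{\langle a\rangle}$ has linear quotients) with Proposition \ref{sfpart} applied to $I_{\langle a\rangle}$. Your added remark that $I_{[a]}=(I_{\langle a\rangle})^{*}$ is the right notational check and matches the definition given before Proposition \ref{sfpart}.
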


\begin{Remark}
\label{exterior} {\em All results concerning linear quotients proved
in this section are correspondingly valid for monomial ideals in the
exterior algebra.}
\end{Remark}

Let $\Delta$ be a $d$-dimensional simplicial complex. We define the
{\em $1$-facet skeleton} of $\Delta$ to be the simplicial complex
$$\Delta^{[1]}=\langle G\: G\subset F\in \mathcal F(\Delta) \text{ and }
|G|=|F|-1\rangle.$$  Recursively, the {\em $i$-facet skeleton} is
defined to be the 1-facet skeleton of $\Delta^{[i-1]}$, for
$i=1,\ldots,d$. For example
 if $\Delta=\langle \{1,2,3\},\{2,3,4\},\{4,5\}\rangle$, then
 $$\Delta^{[1]}=\langle
 \{1,2\},\{1,3\},\{2,3\},\{2,4\},\{3,4\},\{5\}\rangle \text{ and } \Delta^{[2]}=\langle
 \{1\},\{2\},\{3\},\{4\}\rangle.$$
 If $\Delta$ is pure of dimension $d$, then the $i$-facet skeleton of
$\Delta$ is just the $(d-i)$-skeleton of $\Delta$. Now let $\Gamma$
be a shellable simplicial complex with facets $F_1\ldots,F_m$. It is
known that any skeleton of $\Gamma$ is shellable, see \cite[Theorem
2.9]{BW}. Since $I_\Gamma=\bigcap_{i=1}^m P_{F_i}$ where
$P_{F_i}=(x_j\: j\not\in F_i)$, we have
$(I_{\Gamma})^\vee=(u_1,\ldots,u_m)$, where $u_i=\prod_{j\not\in
F_i} x_j$. By Theorem \ref{hhz} $(I_\Gamma)^\vee$ has linear
quotients. Hence $\mm (I_\Gamma)^\vee$ and the squarefree part of
$\mm (I_\Gamma)^\vee$ have linear quotients by Lemma \ref{product}
and Proposition \ref{sfpart}. It is not hard to see that the
squarefree part of $\mm (I_\Gamma)^\vee$ is the Alexander dual of
$I_{\Gamma^{[1]}}$. Hence our discussions yield the following:

\begin{Corollary} If $\Gamma$ is a shellable simplicial complex of dimension $d$,
 then $\Gamma^{[i]}$ is shellable, for $i\leq d$. In particular, if
 $\Gamma$ is pure, then any skeleton of $\Gamma$ is again shellable.
\end{Corollary}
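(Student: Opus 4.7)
The plan is to formalize the outline sketched in the paragraph immediately preceding the statement and then iterate. First I would record the Alexander-dual description: writing $\mathcal{F}(\Gamma)=\{F_1,\ldots,F_m\}$, one has $I_\Gamma=\bigcap_{i=1}^m P_{F_i}$ with $P_{F_i}=(x_j : j \notin F_i)$, so $(I_\Gamma)^\vee=(u_1,\ldots,u_m)$ where $u_i=\prod_{j\notin F_i} x_j$. Shellability of $\Gamma$ translates via Theorem \ref{hhz} into $(I_\Gamma)^\vee$ having linear quotients. Lemma \ref{product} then gives that $\mm (I_\Gamma)^\vee$ has linear quotients, and Proposition \ref{sfpart} upgrades this further to the fact that its squarefree part $(\mm (I_\Gamma)^\vee)^*$ has linear quotients.

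The only piece of genuine content is then the combinatorial identification $(\mm (I_\Gamma)^\vee)^* = (I_{\Gamma^{[1]}})^\vee$. I would argue as follows: a generator $x_k u_i$ of $\mm (I_\Gamma)^\vee$ is squarefree precisely when $k\in F_i$, and in that case $x_k u_i=\prod_{j\notin F_i\setminus\{k\}} x_j$, which is the Alexander-dual monomial associated to the facet $F_i\setminus\{k\}$ of $\Gamma^{[1]}$. After pruning the non-minimal monomials on both sides, one obtains exactly $G((I_{\Gamma^{[1]}})^\vee)$. Theorem \ref{hhz} applied in the reverse direction now yields that $\Gamma^{[1]}$ is shellable.

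To get the full statement I would iterate: since by definition $\Gamma^{[i]}=(\Gamma^{[i-1]})^{[1]}$, an induction on $i\leq d$ using the base case just established shows that every $\Gamma^{[i]}$ is shellable. The ``In particular'' clause then follows from the observation already recorded in the text that, when $\Gamma$ is pure of dimension $d$, the $i$-facet skeleton coincides with the usual $(d-i)$-skeleton.

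The main obstacle is the squarefree-part identification in the middle step; the rest is a direct chain of applications of Lemma \ref{product}, Proposition \ref{sfpart} and Theorem \ref{hhz}. Once one observes that squarefreeness of $x_k u_i$ forces $k\in F_i$ and that the resulting monomial corresponds to removing the vertex $k$ from $F_i$, the matching with the facets of $\Gamma^{[1]}$ under Alexander duality is immediate.
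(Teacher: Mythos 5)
Your proposal is correct and follows exactly the paper's own route: translate shellability to linear quotients of $(I_\Gamma)^\vee$ via Theorem \ref{hhz}, apply Lemma \ref{product} and Proposition \ref{sfpart}, identify the squarefree part of $\mm (I_\Gamma)^\vee$ with $(I_{\Gamma^{[1]}})^\vee$, and iterate. The paper leaves the squarefree-part identification as ``not hard to see''; your verification of it is the right one.
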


\section{A class of pretty clean monomial ideals}
In this section we study a class of monomial ideals which are pretty
clean. This class is a generalization of the class of facet ideals
of forests.

Let $I\subset S$ be a squarefree monomial ideal. There is a unique
simplicial complex $\Delta$ such that $I=I(\Delta)$. Now we
generalize the concept of the  facet ideal of a forest as follows:
Let $I$ be a monomial ideal (not necessarily squarefree) with
$G(I)=\{u_1,\ldots,u_m\}$. A variable $x_i$ is called a free
variable of $I$ if there exists a $t\in[m]$ such that $x_i\mid u_t$
and $x_i\nmid u_j$ for any $j\neq t$.  A monomial $u_t$ is called a
{\em leaf} of $G(I)$ if $u_t$ is the only element in $G(I)$ or there
exists a $j\in [m]$, $j\neq t$ such that $\gcd(u_t,u_i)\mid
\gcd(u_t,u_j)$ for all $i\neq t$. In this case $u_j$ is called a
{\em branch} of $u_t$. We say that $I$ is a {\em monomial ideal of
forest type} if any subset of $G(I)$ has a leaf. It is clear that
any monomial ideal of forest type has a free variable.

Let
$(X_1,X_2)=(\{x_{i_1},\ldots,x_{i_r}\},\{x_{j_1},\ldots,x_{j_s}\})$,
where $X_1$, $X_2$ are subsets of $X=\{x_1,\ldots,x_n\}$ and
$X_1\cap X_2=\emptyset$. Let $I$ be a monomial ideal in
$S=K[x_1,\ldots,x_n]$. As in \cite{TV} we define the {\em minor} of
$I$ with respect to $(X_1,X_2)$ to be the ideal
$I_{(X_1,X_2)}\subset K[X\setminus (X_1\cup X_2)]$ obtained from $I$
by setting $x_{i_k}=0$ for $k=1,\ldots,r$ and $x_{j_l}=1$ for
$l=1,\ldots,s$. In particular, $I_{(\emptyset,\emptyset)}=I$. One
says that the ideal $I$ has the {\em free variable property} if all
minors of $I$ have free variables. The following lemma is a
generalization of \cite[Lemma 4.5]{Fa2} to any monomial ideal of
forest type.

\begin{Lemma}
\label{prime} Let $I$  be a monomial ideal of forest type and
$X'=\{x_{j_1},\ldots,x_{j_s}\}$ a subset of $X$. Then
$I_{(\emptyset,X')}$ is again a monomial ideal of forest type.
\end{Lemma}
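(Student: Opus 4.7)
My plan is to show that every subset $T' \subset G(I_{(\emptyset, X')})$ has a leaf by pulling it back to a subset $T \subset G(I)$ of the same cardinality, invoking the forest-type property of $I$ to find a leaf there, and then pushing this leaf forward along the substitution map.

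First I will describe the generators of $J := I_{(\emptyset, X')}$. Every $u \in G(I)$ factors uniquely as $u = m_u \cdot \bar u$, with $m_u$ a monomial in the variables of $X'$ and $\bar u$ a monomial in the variables of $X \setminus X'$; then $J = (\bar u : u \in G(I))$, so each element of $G(J)$ is of the form $\bar u$ for some $u \in G(I)$. Given an arbitrary $T' \subset G(J)$, I will choose one preimage $u \in G(I)$ for each $v \in T'$, producing a set $T \subset G(I)$ in bijection with $T'$ via $u \mapsto \bar u$.

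Applying the hypothesis to $T$ produces a leaf $u_t$ with branch $u_l$, and the plan is to check that $\bar u_t$ is a leaf of $T'$ with branch $\bar u_l$. The inequality $\bar u_t \neq \bar u_l$ is automatic from the bijection. For the divisibility condition, the key observation is the identity $\gcd(u,u') = \gcd(m_u,m_{u'}) \cdot \gcd(\bar u,\bar u')$ for $u, u' \in G(I)$, valid because the two factors of each generator are supported on disjoint variable sets. Extracting the factor supported on $X \setminus X'$ from the leaf relation $\gcd(u_t,u_i) \mid \gcd(u_t,u_l)$ then immediately yields $\gcd(\bar u_t, \bar u_i) \mid \gcd(\bar u_t, \bar u_l)$ for every $u_i \in T$ with $i \neq t$, as required.

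The only real pitfall is the choice of $T$. Distinct minimal generators of $I$ may share the same image under $X' \to 1$ (two generators differing only in their $X'$-parts cannot divide each other, so both survive in $G(I)$), and if one naively lifted $T'$ to its full preimage in $G(I)$, the leaf produced in the enlarged set might have a branch $u_l$ with $\bar u_l = \bar u_t$, which would be useless. Selecting a single preimage per element of $T'$ removes this issue, after which the rest of the argument is a routine gcd manipulation.
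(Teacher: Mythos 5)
Your proof is correct and follows essentially the same strategy as the paper's: lift a subset of $G(I_{(\emptyset,X')})$ to a subset of $G(I)$, take a leaf there, and push the gcd relation forward using that the $X'$-part and the $(X\setminus X')$-part of a gcd factor independently. The only difference is that the paper reduces to the case $|X'|=1$ (where $u\mapsto\bar u$ is automatically injective on $G(I)$, so the full preimage serves as the lift), whereas you treat all of $X'$ at once and correctly identify and avoid the non-injectivity pitfall by selecting one preimage per element.
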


\begin{proof}
We only need to prove that $I_{(\emptyset,\{x_{j_1}\})}$ is a
monomial ideal of forest type. Hence we may assume that
$X'=\{x_{i}\}$. Let $G(I)=\{u_1,\ldots,u_m\}$. We write $u_j=\bar
u_jx_i^{a_j}$, where $a_j\geq 0$ and $x_i\nmid \bar u_j$ for
$j=1,\ldots,m$. Let $A$ be any subset of $G(I_{(\emptyset,X')})$.
Consider the subset $A'=\{u_j\: \bar u_j\in A\}$ of $G(I)$. Since
$I$ is a monomial ideal of forest type, $A'$ has a leaf $u_p$. This
means that there exists a $u_k\in A'$ such that $\gcd(u_p,u_q)\mid
\gcd(u_p,u_k)$ for all $u_q\in A'$ with $q\neq p$.

Let $\gcd (u_p,u_q)=v_qx_i^{a_q}$ and $\gcd(u_p,u_k)=v_kx_i^{a_k}$,
where $v_q$, $v_k$ are monomials and $x_i\nmid v_q$, $x_i\nmid v_k$.
Then $\gcd (\bar u_p,\bar u_q)=v_q$ which divides $\gcd(\bar
u_p,\bar u_k)=v_k$ for all $\bar u_q\in A$ with $q\neq p$. Hence
$\bar u_p$ is a leaf of $A$.
\end{proof}

Now we recall the following fact from \cite{K}, which is needed for
the proof of the next proposition.

\begin{Lemma} \label{K&u}
Let $K\subset S$ be a monomial ideal and $u$ a monomial in $S$ which
is  regular over $S/K$. If $S/K$ is pretty clean, then $S/(K, u)$ is
pretty clean.
\end{Lemma}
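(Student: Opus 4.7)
The plan is to induct on the length $r$ of a pretty clean filtration $K=I_0\subsetneq\cdots\subsetneq I_r=S$ of $S/K$ with $I_j/I_{j-1}\iso S/P_j$, strengthening the statement inductively so that the output pretty clean filtration of $S/(K,u)$ has supporting primes of the form $(P_j,x_l)$ with $x_l$ a variable dividing $u$. For $r=1$ we have $K=P_1$, regularity of $u$ forces $u\notin P_1$, and $S/(P_1,u)$ is a polynomial ring modulo a single non-unit monomial; it admits a standard pretty clean filtration whose primes are exactly the $(P_1,x_l)$ for variables $x_l\mid u$.

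For the inductive step I would use the short exact sequence
\[
0\To (I_1+(u))/(K+(u)) \To S/(K,u) \To S/(I_1+(u))\To 0.
\]
Writing $I_1=K+(v_1)$ with $(K:v_1)=P_1$ and computing $I_1\cap(u)=(K\cap(u))+(\lcm(u,v_1))$, the leftmost term simplifies to
\[
(I_1+(u))/(K+(u))\;\iso\; I_1/(K+I_1\cap(u))\;\iso\; S/(P_1,w_1), \qquad w_1=u/\gcd(u,v_1),
\]
which vanishes when $u\mid v_1$ (in which case the step is simply omitted). By the Herzog--Popescu theorem, $\Ass(S/K)=\{P_1,\ldots,P_r\}$, so regularity of $u$ yields $u\notin P_j$ for every $j$, and hence $w_1\notin P_1$; the quotient $S/(P_1,w_1)$ is then a polynomial ring modulo a principal monomial ideal, and is pretty clean with supporting primes $(P_1,x_l)$, $x_l\mid w_1$ (in particular $x_l\mid u$). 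Meanwhile $I_1\subset\cdots\subset I_r$ is a pretty clean filtration of $S/I_1$ of length $r-1$, and $u$ remains regular on $S/I_1$ because its associated primes lie among $\{P_2,\ldots,P_r\}$; the inductive hypothesis then produces a pretty clean filtration of $S/(I_1,u)$ whose primes have the shape $(P_j,x_l)$ with $j\ge2$ and $x_l\mid u$.

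Splicing these two filtrations along the displayed exact sequence yields a prime filtration of $S/(K,u)$ of the advertised shape. The main obstacle is the pretty clean condition across the junction. Suppose a prime $(P_1,x_l)$ from the first block is contained in a later prime $(P_j,x_{l'})$ with $j\ge2$ from the second. Since $x_l\mid u$ and $u\notin P_j$, we must have $x_l\notin P_j$, forcing $x_l=x_{l'}$; and since $x_l\mid u\notin P_1$, no variable $x_p\in P_1$ can equal $x_l$, so the containment forces $P_1\subseteq P_j$. The pretty clean hypothesis on the original filtration then gives $P_1=P_j$ and therefore $(P_1,x_l)=(P_j,x_{l'})$, completing the verification.
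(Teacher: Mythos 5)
The paper does not actually prove this lemma: it is quoted from Khan's unpublished preprint \cite{K}, so there is no in-paper argument to compare against. Your proof is correct and self-contained, and it in fact supplies the missing argument. The key points all check out: the short exact sequence is the right one; the identification of the kernel as $S/(P_1,w_1)$ is valid because intersection and colon distribute over sums of monomial ideals (and the modular law gives $I_1\cap(K+(u))=K+(I_1\cap(u))$); the appeal to \cite[Corollary 3.4]{HP} to get $u\notin P_j$ for all $j$ is exactly what makes $w_1\notin P_1$ and keeps $u$ regular on $S/I_1$; the hypersurface $S/(P_1,w_1)$ is clean by peeling off one variable of $w_1$ at a time, with all supporting primes $(P_1,x_l)$ minimal and incomparable; and the strengthened induction (all output primes of the form $(P_j,x_l)$ with $x_l\mid u$) is precisely what is needed to verify condition (b) across the junction, since $x_l\mid u$ forces $x_l\notin P_{j}$ for every $j$ and hence reduces any forbidden containment to $P_1\subseteq P_j$, which the original pretty clean hypothesis kills. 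It is worth noting that your splicing-and-exclusion technique is structurally the same as the one the authors themselves use to prove Proposition \ref{J&K} (there the variable $x_j$ plays the role that $x_l$ plays for you), so your argument fits naturally into the paper. The only cosmetic point is the implicit use of the standard multigraded convention that the cyclic quotient $I_1/K\iso S/P_1$ is generated by the class of a monomial $v_1$; this is harmless in the monomial setting of \cite{HP}.
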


The following proposition is crucial for proving one of the main
results of this section.

\begin{Proposition}\label{J&K}
Let $I\subset S$ be a monomial ideal with
$G(I)=\{u_1,\ldots,u_{m-1},\bar{u}_mx_j^t\}$ where $x_j$ is a free
variable of $I$ and $x_j\nmid \bar u_m$. If
$I_{(\emptyset,\{x_j\})}$ and $I_{(\{x_j\},\emptyset)}$ are pretty
clean, then $I$ is pretty clean.
\end{Proposition}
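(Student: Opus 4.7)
My plan is to use the short exact sequence
\[
0 \to S/(I:\bar u_m) \xrightarrow{\cdot\bar u_m} S/I \to S/(I+(\bar u_m)) \to 0
\]
and concatenate pretty clean filtrations of the two outer terms, choosing this SES (rather than the a priori natural one via $\cdot x_j^t$) precisely so that the primes arising on the submodule side contain $x_j$ while those on the quotient side do not.

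First I identify the two ends. Let $K = (u_1,\ldots,u_{m-1})S$, so $I = K + (\bar u_m x_j^t)$. Using that $x_j$ divides no generator of $K$ and $x_j\nmid \bar u_m$, one computes
\[
I + (\bar u_m) = K + (\bar u_m) = I_{(\emptyset,\{x_j\})}\cdot S \quad\text{and}\quad I:\bar u_m = (K:\bar u_m) + (x_j^t).
\]
Writing $S' = K[X\setminus\{x_j\}]$, the quotient $S/(I+(\bar u_m)) \cong (S'/I_{(\emptyset,\{x_j\})})[x_j]$ is pretty clean by extending the pretty clean filtration supplied by the hypothesis across the polynomial variable $x_j$; every prime appearing in that filtration is extended from $S'$ and hence does not contain $x_j$.

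Next I handle the submodule. I would first establish that colon by a monomial preserves pretty cleanness of monomial quotients, so that $S'/(I_{(\{x_j\},\emptyset)}:\bar u_m)$ is pretty clean: given a pretty clean filtration $K'_0\subset\cdots\subset K'_r=S'$ of $S'/K'$ with quotients $S'/P'_i$ generated by $v_i$, the induced chain of colons has successive annihilators
\[
(K'_{i-1}:\bar u_m):(v_i/\gcd(v_i,\bar u_m)) \;=\; P'_i:\bigl(\bar u_m/\gcd(v_i,\bar u_m)\bigr),
\]
which equals either $P'_i$ (giving a quotient $S'/P'_i$ in the inherited admissible order) or all of $S'$ (a trivial step to be deleted). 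Extending by $x_j$ then makes $S/(K:\bar u_m)\cong (S'/(I_{(\{x_j\},\emptyset)}:\bar u_m))[x_j]$ pretty clean, and since $x_j^t$ is regular on this module (no generator of $K:\bar u_m$ involves $x_j$), Lemma~\ref{K&u} yields that $S/((K:\bar u_m)+(x_j^t))=S/(I:\bar u_m)$ is pretty clean. Because $I:\bar u_m$ contains $x_j^t$, every prime occurring in this filtration contains $x_j$.

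Finally I concatenate. The pretty clean filtration of $S/(I:\bar u_m)$, lifted through $\cdot \bar u_m$, fills the part of $S/I$ from $I$ up to $J=I+(\bar u_m)$; the pretty clean filtration of $S/J$ pulled back through the surjection fills the part from $J$ up to $S$. Pretty cleanness within each block is inherited. Across the boundary, every earlier prime contains $x_j$ while every later prime does not, so no earlier prime can be properly contained in a later one, and the concatenated filtration of $S/I$ is pretty clean.

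The main obstacle I expect is establishing the colon-preservation statement for pretty clean monomial quotients precisely enough that the resulting primes, when pushed through Lemma~\ref{K&u}, all carry the factor $x_j$; once that is secured, the choice of the SES via $\cdot \bar u_m$ makes the crucial boundary-inclusion condition of the pretty clean property entirely automatic.
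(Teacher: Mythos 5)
Your proof is correct, and its overall architecture coincides with the paper's: both split $S/I$ along $J=I+(\bar u_m)$, establish pretty cleanness of $J/I\cong S/(I:\bar u_m)$ and of $S/J$ separately, and glue the two filtrations using the observation that every prime occurring on the $J/I$ side contains $x_j$ (since $x_j^t\in I:\bar u_m$) while no prime on the $S/J$ side does, so that condition (b) across the boundary is automatic. The genuine difference is in how $J/I$ is handled. The paper asserts $I:\bar u_m=(K,x_j^t)$ with $K=(u_1,\ldots,u_{m-1})$ and applies Lemma~\ref{K&u} directly to $S/K$; but the correct computation is the one you give, $I:\bar u_m=(K:\bar u_m)+(x_j^t)$, and these ideals differ in general (for instance $I=(x_1^2x_2,\;x_1x_2x_3)$ with $\bar u_m=x_1x_2$ and $x_j=x_3$ gives $I:\bar u_m=(x_1,x_3)\neq(x_1^2x_2,x_3)$). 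Your auxiliary lemma --- that the colon of a pretty clean monomial quotient by a monomial is again pretty clean, obtained by pushing the filtration through the colon, noting that the $i$-th step is cyclic with annihilator $P'_i:\bigl(\bar u_m/\gcd(v_i,\bar u_m)\bigr)$, hence equal to $P'_i$ or to all of $S'$, and deleting the collapsed steps (which preserves condition (b) because the surviving primes keep their relative order) --- is exactly what is needed to bridge this gap, and it is correct. So your argument is not only valid but in fact repairs a small inaccuracy in the published proof, at the cost of one additional, standard but worth stating, lemma.
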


\begin{proof} We denote $I_{(\emptyset,\{x_j\})}=(u_1,\ldots,u_{m-1},\bar u_m)$
and $I_{(\{x_j\},\emptyset)}=(u_1,\ldots,u_{m-1})$ by $J$ and $K$
respectively.  It is easy to see that $J/I=(I,\bar{u}_m)/I\iso S/(I:
\bar{u}_m)=S/(K,x_j^t)$. Since $S/K$ is pretty clean, by Lemma
\ref{K&u} $J/I$ is also pretty clean. Let $\mathcal F_1\:
I=I_0\subset I_1\subset\cdots\subset I_r=J$ be a pretty clean
filtration of $J/I$ with $I_i/I_{i-1}\iso S/P_i$. Then by
\cite[Corollary 3.4]{HP} $\Supp(\mathcal
F_1)=\Ass(J/I)=\Ass(S/(K,x_j^t))$. Hence $x_j\in P_i$ for
$i=1,\ldots,r$.

By our assumption $S/J$ is pretty clean. Let $\mathcal F_2\:
J=I_r\subset I_{r+1}\subset\cdots\subset I_{r+s}=S$ be a pretty
clean filtration of $S/J$ with $I_{r+i}/I_{r+i-1}\iso S/P_{r+i}$.
Then $P_{r+i}\in\Ass(S/J)$. Hence $x_j\not\in P_{r+i}$ for
$i=1,\ldots, s$.

Combining the prime filtrations $\mathcal F_1$ and $\mathcal F_2$ we
get the prime filtration
\[
\mathcal F\: I=I_0\subset \cdots\subset I_r=J\subset
I_{r+1}\subset\cdots\subset I_{r+s}=S
\]
of $S/I$. Since  $x_j\in P_i$ for $i=1,\ldots,r$ and $x_j\not\in
P_{r+i}$ for $i=1,\ldots, s$, one has $P_i\nsubseteq P_{r+t}$ for
any $i\in [r]$ and any $t\in [s]$. Therefore $\mathcal F$ is a
pretty clean filtration of $S/I$ since $\mathcal F_1$ and $\mathcal
F_2$ are pretty clean filtrations.
\end{proof}

Combining Proposition \ref{J&K} with Lemma \ref{prime}, we get the
following theorem.

\begin{Theorem}
\label{pretty clean} If $I\subset S$ is a monomial ideal of forest
type, then $S/I$ is pretty clean.
\end{Theorem}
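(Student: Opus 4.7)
The plan is to induct on $n = |X|$, the number of variables in the ambient ring $S$, using Proposition \ref{J&K} as the inductive step and Lemma \ref{prime} to carry the hypothesis to the relevant minors.

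For the base case $n = 0$, the only monomial ideals are $(0)$ and $S$, both of which yield pretty clean quotients trivially. For the inductive step, since $I$ is of forest type we obtain, as asserted in the paragraph preceding Lemma \ref{prime}, a free variable $x_j$ of $I$. Let $u_m = \bar u_m x_j^t$ denote the unique generator of $I$ divisible by $x_j$, with $x_j \nmid \bar u_m$, and put
\[
J = I_{(\emptyset, \{x_j\})}, \qquad K = I_{(\{x_j\}, \emptyset)},
\]
both living in $S' = K[X \setminus \{x_j\}]$. By Lemma \ref{prime}, $J$ is again of forest type. For $K$, the hypothesis that $x_j$ appears only in $u_m$ implies that setting $x_j = 0$ simply removes $u_m$, so $G(K) = \{u_1,\ldots,u_{m-1}\} \subseteq G(I)$; any subset of $G(K)$ is then a subset of $G(I)$ and so admits a leaf, showing that $K$ is of forest type as well.

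Since both $J$ and $K$ live in a polynomial ring with one fewer variable, the induction hypothesis yields that $S'/J$ and $S'/K$ are pretty clean. Pretty cleanness ascends along the polynomial extension $S' \subseteq S = S'[x_j]$: a prime filtration of $S'/K$ with factors $S'/P_i$ tensors with $K[x_j]$ to give a prime filtration of $S/KS$ with factors $S/P_iS$, and the incomparability condition on primes that is required by the pretty clean axiom is preserved under this extension. Consequently $S/JS$ and $S/KS$ are pretty clean, and Proposition \ref{J&K} then delivers the desired conclusion that $S/I$ is pretty clean. The main obstacle—such as it is—is the bookkeeping about which ring each minor lives in and the routine justification of the ascent of pretty cleanness under polynomial extension; the combinatorial substance of the argument is carried entirely by Lemma \ref{prime} and Proposition \ref{J&K}.
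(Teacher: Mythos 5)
Your proof is correct and follows essentially the same route as the paper: induction on the number of variables, Lemma \ref{prime} to see that the minor $J=I_{(\emptyset,\{x_j\})}$ is again of forest type, the observation that $K=I_{(\{x_j\},\emptyset)}$ has $G(K)\subseteq G(I)$ and hence is of forest type, and then Proposition \ref{J&K} to conclude. Your explicit justification that pretty cleanness ascends along the flat extension $S'\subseteq S'[x_j]$ is a welcome bit of bookkeeping that the paper's own proof leaves implicit, but it does not change the substance of the argument.
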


\begin{proof}
We use induction on $n$ the number of variables to prove the
assertion. Let $G(I)=\{u_1,\ldots,u_m\}$ and let $x_i$ be a free
vertex of $I$. We may assume that $u_m=\bar u_mx_i^a$ with $a>0$. By
Lemma \ref{prime}, the ideal $J=(u_1,\ldots,u_{m-1},\bar u_m)$ is a
monomial ideal of forest type. It is clear that
$K=(u_1,\ldots,u_{m-1})$ is also a monomial ideal of forest type. By
induction hypothesis  $S/J$ and $S/K$ are pretty clean.  Therefore
by Proposition \ref{J&K}, $S/I$ is pretty clean.
\end{proof}

Let $I\subset S$ be a monomial ideal. According to Stanley
\cite[Section II, 3.9]{St1} and Schenzel \cite{S}, a finite
filtration $\mathcal F\: I=I_0\subset I_1\subset \cdots\subset
I_r=S$ of $S/I$ is called a {\em Cohen--Macaulay filtration} if each
quotient  $I_j/I_{j-1}$ is Cohen--Macaulay, and
\[
\dim(I_1/I_0)<\dim(I_2/I_1)<\cdots<\dim(I_r/I_{r-1}).
\]
The module $S/I$ is called {\em sequentially Cohen--Macaulay} if it
has a Cohen--Macaulay filtration.

It follows from \cite[Corollary 4.3]{HP} that if $S/I$ is pretty
clean, then $S/I$ is sequentially Cohen--Macaulay. Therefore we have
the following corollary, which generalizes the main result of Faridi
\cite{Fa3}.

\begin{Corollary}
\label{more} If $I\subset S$ is a monomial ideal of forest type,
then $S/I$ is sequentially Cohen--Macaulay.
\end{Corollary}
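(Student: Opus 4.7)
The plan is to deduce this corollary immediately from Theorem~\ref{pretty clean} together with the general implication ``pretty clean $\Rightarrow$ sequentially Cohen--Macaulay'' recorded in \cite[Corollary 4.3]{HP}, which the authors have already cited in the paragraph preceding the statement. First I would apply Theorem~\ref{pretty clean} to the given monomial ideal of forest type $I$ to produce a pretty clean filtration $\mathcal F\: I = I_0 \subset I_1 \subset \cdots \subset I_r = S$ with $I_j/I_{j-1} \iso S/P_j$ for monomial primes $P_j$ satisfying the incomparability condition (b) of the definition of pretty clean.

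Second, I would invoke \cite[Corollary 4.3]{HP} to pass from this pretty clean filtration to a Cohen--Macaulay filtration. The mechanism there is to regroup the successive factors of $\mathcal F$ according to the dimension of $S/P_j$: the incomparability property on $\Supp(\mathcal F)$ allows one to reorder the filtration so that successive quotients of equal dimension are grouped together, yielding a new filtration $I = J_0 \subset J_1 \subset \cdots \subset J_s = S$ whose factors $J_i/J_{i-1}$ are Cohen--Macaulay (being direct sums of factors of the form $S/P_j$ of a common dimension) and satisfy $\dim(J_1/J_0) < \dim(J_2/J_1) < \cdots < \dim(J_s/J_{s-1})$. This is precisely a Cohen--Macaulay filtration of $S/I$, so $S/I$ is sequentially Cohen--Macaulay.

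The only ``hard'' step is already behind us: the entire inductive work establishing pretty cleanness for forest-type monomial ideals is carried by Theorem~\ref{pretty clean} (which itself rests on Lemma~\ref{prime} and Proposition~\ref{J&K}). Given Theorem~\ref{pretty clean} and the cited implication from \cite{HP}, the present corollary is a one-line chaining of these two results, and there is no genuine obstacle to overcome.
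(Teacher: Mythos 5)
Your proposal is correct and follows exactly the paper's route: apply Theorem~\ref{pretty clean} to get that $S/I$ is pretty clean, then invoke \cite[Corollary 4.3]{HP} to conclude sequential Cohen--Macaulayness. The extra sketch of the regrouping mechanism inside \cite[Corollary 4.3]{HP} is fine but not needed, since the paper simply cites that result.
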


Let $I$ be a monomial ideal, any decomposition of $S/I$  as direct
sum of $K$-vector spaces of the form $uK[Z]$, where $u$ is a
monomial in $S$ and $Z\subset \{x_1,\ldots, x_n\}$, is called a {\em
Stanley decomposition} of $S/I$. Stanley  conjectured in \cite{St}
that there always exists a Stanley decomposition
\[
S/I=\Dirsum_{i=1}^ru_iK[Z_i]
\]
such that $|Z_i|\geq\depth(S/I)$ for all $i\in [r]$. Recently
Stanley's conjecture was studied in several articles, see for
example \cite{HP}, \cite{HSY} and \cite{So}.

In \cite[Theorem 6.5]{HP}, the authors proved the following

\begin{Theorem}
\label{hp} Let $I\subset S$ be a monomial ideal. If $I$ is pretty
clean, then Stanley's conjecture holds for $S/I$.
\end{Theorem}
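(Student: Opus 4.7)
The plan is to extract a Stanley decomposition directly from a pretty clean filtration and then verify the dimension condition on each piece by comparing with the depth.

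First, fix a pretty clean filtration $\mathcal F\colon I=I_0\subset I_1\subset\cdots\subset I_r=S$ with $I_j/I_{j-1}\iso S/P_j$ for monomial primes $P_j$. Since each inclusion $I_{j-1}\subset I_j$ is a monomial inclusion and the quotient is cyclic, there is a monomial $u_j\in S$ such that $I_j=I_{j-1}+(u_j)$ and $(I_{j-1}\colon u_j)=P_j$. Setting $Z_j=\{x_i\colon x_i\notin P_j\}$, the isomorphism $I_j/I_{j-1}\iso S/P_j$ is realized by the $K$-linear map sending $u_j\cdot v\mapsto \bar v$ for monomials $v$ supported in $Z_j$. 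Thus $I_j/I_{j-1}=u_jK[Z_j]$ as a $K$-vector space, and splicing these together gives the Stanley decomposition
\[
S/I=\Dirsum_{j=1}^{r}u_jK[Z_j].
\]

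It remains to show that $|Z_j|\geq\depth(S/I)$ for every $j$. Here I would use two facts about pretty clean filtrations recalled earlier in the paper. First, by \cite[Corollary 3.4]{HP}, $\Supp(\mathcal F)=\Ass(S/I)$, so each $P_j$ is an associated prime of $S/I$; in particular $|Z_j|=\dim(S/P_j)\geq\min\{\dim(S/P)\colon P\in\Ass(S/I)\}$. Second, by \cite[Corollary 4.3]{HP} the module $S/I$ is sequentially Cohen--Macaulay, and for any sequentially Cohen--Macaulay module $M$ one has
\[
\depth M=\min\{\dim(S/P)\colon P\in\Ass(M)\}.
\]
Combining these two observations immediately yields $|Z_j|\geq\depth(S/I)$.

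The main obstacle is the depth identity for sequentially Cohen--Macaulay modules. One way I would argue it is to use the Cohen--Macaulay filtration $0=M_0\subset M_1\subset\cdots\subset M_s=S/I$: the associated primes of $S/I$ are precisely the associated primes of the Cohen--Macaulay quotients $M_i/M_{i-1}$, each $M_i/M_{i-1}$ has depth equal to its dimension, and a short depth lemma applied inductively to the exact sequences $0\to M_{i-1}\to M_i\to M_i/M_{i-1}\to 0$ forces $\depth(S/I)$ to equal the smallest of these dimensions. Once this identity is in hand, the argument above closes the proof.
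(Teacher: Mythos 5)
The paper does not actually prove this statement: it is quoted verbatim from Herzog--Popescu \cite[Theorem 6.5]{HP}, so there is no internal proof to compare against. Your reconstruction is, however, correct and is essentially the argument of \cite{HP}: a pretty clean (multigraded prime) filtration yields the Stanley decomposition $S/I=\Dirsum_j u_jK[Z_j]$ with $Z_j=\{x_i\: x_i\notin P_j\}$, and the dimension bound comes from identifying the $P_j$ with associated primes. Two small remarks. First, to produce the monomial $u_j$ you should invoke the $\ZZ^n$-grading: the $I_j$ are monomial ideals and $I_j/I_{j-1}$ is a cyclic multigraded module, so its generator sits in a single multidegree and is the class of a monomial; ``the quotient is cyclic'' alone would not justify this. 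Second, your detour through sequential Cohen--Macaulayness is valid but heavier than necessary: once you know $\Supp(\mathcal F)=\Ass(S/I)$ by \cite[Corollary 3.4]{HP}, the completely general inequality $\depth M\leq \dim(S/P)$ for every $P\in\Ass(M)$ (e.g.\ \cite[Proposition 1.2.13]{BH}) already gives $|Z_j|=\dim(S/P_j)\geq\depth(S/I)$, so you need neither \cite[Corollary 4.3]{HP} nor the depth identity for sequentially Cohen--Macaulay modules. What your stronger route buys is the exact value $\depth(S/I)=\min_j\dim(S/P_j)$, which is more than the conjecture requires.
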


As an immediate consequence of Theorem \ref{pretty clean} and
Theorem \ref{hp} we have the following:

\begin{Corollary}
\label{forget} If $I$ is a monomial ideal of forest type, then
Stanley's conjecture holds for $S/I$.
\end{Corollary}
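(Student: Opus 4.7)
The plan is to combine the two theorems stated immediately before the corollary: Theorem \ref{pretty clean} and Theorem \ref{hp}. Since $I$ is a monomial ideal of forest type, Theorem \ref{pretty clean} gives that $S/I$ is pretty clean. Then Theorem \ref{hp} applies directly to conclude that Stanley's conjecture holds for $S/I$.

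In more detail, I would first invoke Theorem \ref{pretty clean} to produce a pretty clean filtration $\mathcal F\: I=I_0\subset I_1\subset\cdots\subset I_r=S$ of $S/I$, so that each $I_j/I_{j-1}\cong S/P_j$ for a monomial prime $P_j$, and whenever $P_i\subset P_j$ with $i<j$, equality holds. Then I would quote Theorem \ref{hp}: given such a pretty clean filtration, one obtains a Stanley decomposition
\[
S/I=\Dirsum_{i=1}^r u_iK[Z_i]
\]
with $|Z_i|\geq \depth(S/I)$ for every $i$, which is exactly the content of Stanley's conjecture.

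There is no real obstacle here; the corollary is a one-line consequence of two results already established. The only minor bookkeeping is to note that both hypotheses line up: ``monomial ideal of forest type'' is precisely the hypothesis of Theorem \ref{pretty clean}, and ``$S/I$ pretty clean'' is precisely the hypothesis of Theorem \ref{hp}. Hence the proof is essentially a citation chain and no further argument is required.
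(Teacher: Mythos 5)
Your proof is correct and is exactly the paper's argument: the corollary is stated there as an immediate consequence of Theorem \ref{pretty clean} (forest type implies pretty clean) combined with Theorem \ref{hp} (pretty clean implies Stanley's conjecture). No further comment is needed.
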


Let $\mathcal I$ be the class of monomial ideals with the following
properties:
\begin{enumerate}
\item[(a)] any irreducible monomial ideal is in $\mathcal I$;
\item[(b)] each $I\in \mathcal I$ has a free variable;
\item[(c)] if $x_i$ is a free variable of $I$, then $I\in \mathcal I$
if and only if the minors $I_{(\emptyset, \{x_i\})}$ and
$I_{(\{x_i\}, \emptyset)}$ are in $\mathcal I$.
\end{enumerate}

It is obvious that if a monomial ideal $I$ has the free variable
property, then $I\in \mathcal I$. Moreover we have the following:
\begin{Theorem}
\label{freetree} Let $I\subset S$ be a monomial ideal. The following
statements are equivalent.
\begin{enumerate}
\item[(i)] $I$ is a monomial ideal of forest type;
\item[(ii)] $I$ has the free variable property;
\item[(iii)] $I\in\mathcal I$.
\end{enumerate}
\end{Theorem}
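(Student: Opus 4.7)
The plan is to establish the cycle (i) $\Rightarrow$ (ii) $\Rightarrow$ (iii) $\Rightarrow$ (i).

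For (i) $\Rightarrow$ (ii), Lemma~\ref{prime} already shows that the forest-type property is preserved under setting variables to $1$. The analogous statement for setting a single variable $x_i$ to $0$ is immediate: $G(I_{(\{x_i\},\emptyset)})=\{u\in G(I): x_i\nmid u\}$, so every subset of the new generating set is already a subset of $G(I)$ and inherits a leaf. Iterating the two operations shows every minor of $I$ is of forest type, and in particular has a free variable: if $u_t$ is a leaf of the whole $G(I)$ with branch $u_j$, then minimality of $G(I)$ gives a variable $x$ dividing $u_t$ but not $u_j$, and the leaf property $\gcd(u_t,u_i)\mid\gcd(u_t,u_j)$ forces $x\nmid u_i$ for all $i\neq t$.

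For (ii) $\Rightarrow$ (iii), I would induct on the number $n$ of variables. If $I$ is irreducible, property (a) of $\mathcal I$ applies. Otherwise, let $x_i$ be a free variable guaranteed by the free variable property. The minors $I_{(\emptyset,\{x_i\})}$ and $I_{(\{x_i\},\emptyset)}$ live in a polynomial ring with one fewer variable, and both inherit the free variable property because a minor of a minor is itself a minor of $I$. The induction hypothesis places both in $\mathcal I$, and property (c) then gives $I\in\mathcal I$.

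For (iii) $\Rightarrow$ (i) I would also induct on $n$. The base case of irreducible $I$ is trivial: generators are pure powers of distinct variables, hence pairwise coprime, so every element of every subset is a leaf. For the inductive step, choose a free variable $x_i$ from property (b) with unique containing generator $u_t=\bar u_t x_i^c$, $c\geq 1$, and note that by property (c) and induction both minors are of forest type. Given $A\subseteq G(I)$: if $u_t\notin A$, then $A\subseteq G(I_{(\{x_i\},\emptyset)})$ and induction supplies the leaf. If $u_t\in A$ and some $u_j\in A\setminus\{u_t\}$ satisfies $\bar u_t\mid u_j$, then using $x_i\nmid u_k$ for $k\neq t$ we compute $\gcd(u_t,u_k)=\gcd(\bar u_t,u_k)\mid\bar u_t=\gcd(u_t,u_j)$, so $u_t$ itself is a leaf of $A$ with branch $u_j$. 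Otherwise $\bar u_t\in G(I_{(\emptyset,\{x_i\})})$ (minimality of $u_t$ forbids $u_k\mid\bar u_t$ for any $k\neq t$) and $A''=(A\setminus\{u_t\})\cup\{\bar u_t\}\subseteq G(I_{(\emptyset,\{x_i\})})$, so induction supplies a leaf $u^*$ of $A''$ with branch $u^{**}$; the identity $\gcd(u_t,v)=\gcd(\bar u_t,v)$ for any $v$ not divisible by $x_i$ then translates the leaf property back to $A$, substituting $u_t$ for $\bar u_t$ in the role of leaf when $u^*=\bar u_t$, or in the role of branch when $u^{**}=\bar u_t$.

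The main obstacle is this last case analysis in (iii) $\Rightarrow$ (i): one must confirm the leaf property transfers cleanly between $A$ and $A''$ across the three sub-subcases ($u^*\neq\bar u_t$ and $u^{**}\neq\bar u_t$; $u^*=\bar u_t$; $u^{**}=\bar u_t$). The engine of every translation is the identity $\gcd(u_t,v)=\gcd(\bar u_t,v)$ for $v\in G(I)\setminus\{u_t\}$, a direct consequence of $x_i$ being a free variable, which allows us to treat $u_t$ and $\bar u_t$ interchangeably against any other generator.
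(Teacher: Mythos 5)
Your overall route is the same as the paper's: (i)$\Rightarrow$(ii) via Lemma~\ref{prime} together with the observation that any $J$ with $G(J)\subset G(I)$ is again of forest type; (ii)$\Rightarrow$(iii) by unwinding the recursive definition of $\mathcal I$ (the paper calls this obvious, your induction on $n$ is a fine way to say it); and (iii)$\Rightarrow$(i) by induction on $n$ with exactly the paper's three-way case analysis on whether $u_m\in A$ and whether $\bar u_m$ divides some other element of $A$, driven by the identity $\gcd(u_m,u_i)=\gcd(\bar u_m,u_i)$. Your extra sub-case (the branch of the leaf of $A'$ being $\bar u_m$) is handled correctly by the same identity and is implicitly covered by the paper's ``if $u_p\neq\bar u_m$, then $u_p$ itself is a leaf of $A$''.

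There is, however, a genuine gap in your (i)$\Rightarrow$(ii), at the one point where you go beyond what the paper writes: you claim that if $u_t$ is a leaf with branch $u_j$, then ``minimality of $G(I)$ gives a variable $x$ dividing $u_t$ but not $u_j$.'' Minimality only gives $u_t\nmid u_j$, i.e.\ some $x$ with $\deg_x u_t>\deg_x u_j$; it does \emph{not} give $x\nmid u_j$ unless $u_t$ is squarefree (take $u_t=x^2y$, $u_j=xy^2$). This is not a cosmetic issue: with the paper's definitions the ideal $I=(x^2y,\,xy^2)$ is of forest type (every two-element subset trivially has a leaf), yet neither $x$ nor $y$ is a free variable, so the implication ``forest type $\Rightarrow$ free variable'' fails as stated. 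The paper itself merely asserts ``it is clear that any monomial ideal of forest type has a free variable'' without proof, so you have inherited its gap rather than created one — but your attempted justification is where the argument actually breaks, and it cannot be repaired without strengthening the definition of a leaf (e.g.\ requiring the leaf to contain a free variable, as holds automatically in the squarefree/simplicial setting). The remaining implications, and the whole theorem restricted to squarefree ideals, are unaffected.
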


\begin{proof}
(i)\implies (ii):  Let $X_1$ and $X_2$ be any subsets of $X$ with
$X_1\cap X_2=\emptyset$. Since any monomial ideal $J$ with
$G(J)\subset G(I)$ is again a monomial ideal of forest type, this
together with Lemma \ref{prime} imply that $I_{(X_1,X_2)}$ is again
a monomial ideal of forest type. Hence it has a free variable.

(ii)\implies (iii) is obvious.

(iii)\implies (i): We show that $I$ is a monomial ideal of forest
type by using induction on the number of variables $n$ which appear
in $I$. The case $n=1$ is clear. Let $n>1$. Since $I\in\mathcal I$,
we may assume that $G(I)=\{u_1,\ldots,u_m\}$, where $u_m=\bar
u_mx_t^a$ and $x_t$ is a free variable of $I$. Since the ideals
$J=(u_1,\ldots,u_{m-1},\bar u_m)$ and $K=(u_1,\ldots,u_{m-1})$ are
in $\mathcal I$ with less variables, by induction hypothesis $J$ and
$K$ are monomial ideals of forest type. Let $A$ be any subset of
$G(I)$. If $u_m\not\in A$, then $A\subset G(K)$. Hence it has a
leaf. If $u_m\in A$ and $\bar u_m\mid u_j$ for some $u_j\in A$ and
$j\neq m$, then $\gcd(u_m,u_j)=\bar u_m$ and $\gcd(u_m,u_i)\mid \bar
u_m$ for any $i\neq m$. This means that $u_m$ is a leaf of $A$. Now
we may assume that $u_m\in A$ and $\bar u_m\nmid u_j$ for any
 $u_j\in A$ and $j\neq m$. Then $A'=(A\setminus \{u_m\})\cup \{\bar
u_m\}$ is a subset of $G(J)$ and hance it has a leaf. Let $u_p$ be a
leaf of $A'$. Since $x_t$ is a free variable, we have $\gcd
(u_m,u_i)=\gcd(\bar u_m,u_i)$ for any $i\neq m$. If $u_p=\bar u_m$,
then $u_m$ is a leaf of $A$. If $u_p\neq \bar u_m$, then $u_p$
itself is a leaf of $A$.
\end{proof}

A {\em clutter} $\mathcal C$ with vertex set $[n]$ is a family of
subsets of $[n]$, called {\em edges}, with the property that non of
them is contained in another. The edge ideal of a clutter $\mathcal
C$ is defined to be the ideal $I(\mathcal C)= (x_C\: C \text { is an
edge of } \mathcal C)$, where $x_C=\prod_{i\in C} x_i$. A clutter is
a special kind of hypergraph. One may also view a clutter $\mathcal
C$ as the set of facets of some simplicial complex $\Delta$. In this
case, $I(\mathcal C)=I(\Delta)$.

In \cite{TV}, the authors say a clutter $\mathcal C$ has the free
vertex property if the edge ideal $I(\mathcal C)$ has the free
variable property. By Theorem \ref{freetree} one sees that $\mathcal
C$ has the free vertex property if and only if $I(\mathcal C)$ is a
monomial ideal of forest type. If we consider $\mathcal C$ to be the
set of facets of some simplicial complex $\Delta$, then $\mathcal C$
has the free vertex property if and only if $\Delta$ is a forest. In
the following we denote by $\Delta_{\mathcal C}$ the simplicail
complex whose Stanley--Reisner ideal is $I(\mathcal C)$.

As a corollary of Theorem \ref{freetree} and Theorem \ref{pretty
clean}, we obtain the following:

\begin{Corollary}(\cite[Theorem 5.3]{TV})
\label{tv} If the clutter $\mathcal C$ has the free vertex property,
then $S/I(\mathcal C)$ is clean, i.e. $\Delta_{\mathcal C}$ is
shellable.
\end{Corollary}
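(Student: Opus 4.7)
The statement is a direct corollary of the machinery already established, so my plan is to assemble the pieces rather than prove anything new. The chain of implications I have in mind is: free vertex property $\Longrightarrow$ forest type $\Longrightarrow$ pretty clean $\Longrightarrow$ clean $\Longrightarrow$ shellable.

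More concretely, first I would invoke Theorem~\ref{freetree}, specifically the equivalence (i)~$\iff$~(ii), to translate the hypothesis that $\mathcal{C}$ has the free vertex property (i.e.\ that $I(\mathcal{C})$ has the free variable property) into the statement that $I(\mathcal{C})$ is a monomial ideal of forest type. Next I would apply Theorem~\ref{pretty clean} to conclude that $S/I(\mathcal{C})$ is pretty clean.

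At this point I would use the observation made in Section~1, just after the definition of pretty clean, that for a \emph{squarefree} monomial ideal, pretty clean is equivalent to clean. Since $I(\mathcal{C})$ is squarefree by construction, $S/I(\mathcal{C})$ is in fact clean. Finally, because $I(\mathcal{C})$ is by definition the Stanley--Reisner ideal $I_{\Delta_{\mathcal{C}}}$, Theorem~\ref{Dress} of Dress applies and gives the equivalent geometric statement that $\Delta_{\mathcal{C}}$ is (non-pure) shellable, which is exactly the conclusion.

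There is no real obstacle here; the only thing worth double-checking is that the notion of ``edge ideal of a clutter'' used in the hypothesis coincides with the ``Stanley--Reisner ideal of $\Delta_{\mathcal{C}}$'' used in Theorem~\ref{Dress}, but this identification is precisely the definition of $\Delta_{\mathcal{C}}$ recalled in the paragraph preceding the corollary. Thus the proof is essentially a one-line citation chain: Theorem~\ref{freetree} $\to$ Theorem~\ref{pretty clean} $\to$ (pretty clean $=$ clean in the squarefree case) $\to$ Theorem~\ref{Dress}.
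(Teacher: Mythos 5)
Your proof is correct and follows exactly the route the paper intends: Theorem~\ref{freetree} to pass from the free vertex property to forest type, Theorem~\ref{pretty clean} for pretty cleanness, the remark that pretty clean implies clean for squarefree ideals, and Theorem~\ref{Dress} to translate cleanness into shellability of $\Delta_{\mathcal C}$. The paper leaves this chain implicit ("As a corollary of Theorem~\ref{freetree} and Theorem~\ref{pretty clean}\dots"), and you have simply spelled it out.
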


Let $\mathcal C$ be a clutter and $\Delta$ the simplcial complex
such that $I(\mathcal C)=I(\Delta)$. We say that the clutter
$\mathcal C$ is a forest if $\Delta$ is a simplcial forest. Up to
the order of the vertices and the order of the edges, a clutter is
determined by its incidence matrix and vice versa. The incidence
matrix $M_{\mathcal C}$ is defined as follows: let $1,\ldots,n$ be
the vertices and $C_1,\ldots, C_m$ be the edges of the clutter
$\mathcal C$. Then $M_{\mathcal C}=(e_{ij})$ is an $n\times m$
matrix with $e_{ij}=1$ if $i\in C_j$ and $e_{ij}=0$ if $i\not \in
C_j$. A clutter is called {\em totally balanced} if its incidence
matrix has no square submatrix of order at least $3$ with exactly
two 1's in each row and column. It is known that a totally balanced
clutter has the free vertex property, see \cite[Corollary
83.3a]{Sch}. On the other hand, in \cite[Theorem 3.2]{HHTZ}, it is
shown that $\mathcal C$ is a forest if and only if $\mathcal C$ is
totally balanced. These together with Theorem \ref{freetree} imply
the following:

\begin{Corollary}
\label{balanced} Let $\mathcal C$ be a clutter. The following
statements are equivalent:
\begin{enumerate}
\item[(i)] $\mathcal C$ is a forest;
\item[(ii)] $\mathcal C$ is totally balanced;
\item[(iii)] $\mathcal C$ has the free vertex property.
\end{enumerate}
\end{Corollary}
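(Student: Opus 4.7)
The plan is to close a cycle of implications (i) $\Rightarrow$ (ii) $\Rightarrow$ (iii) $\Rightarrow$ (i), each link being supplied by one of the three ingredients assembled in the paragraph preceding the statement: the HHTZ characterization of simplicial forests as totally balanced clutters, Schrijver's fact that total balancedness implies the existence of a free vertex, and our own Theorem~\ref{freetree}.

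For (i) $\Leftrightarrow$ (ii) I would simply cite \cite[Theorem 3.2]{HHTZ}: identifying $\mathcal C$ with the set of facets of a simplicial complex $\Delta$, the statement ``$\Delta$ is a simplicial forest in the sense of Faridi'' is shown there to coincide with total balancedness of the incidence matrix $M_{\mathcal C}$. No further work is required here. For (ii) $\Rightarrow$ (iii) I would use that the property of being totally balanced is hereditary under taking minors of the incidence matrix, which in clutter language means that every minor $\mathcal C_{(X_1,X_2)}$ of a totally balanced clutter is again totally balanced; combined with \cite[Corollary 83.3a]{Sch}, which guarantees a free vertex in any totally balanced clutter, this yields that a free vertex exists in every minor of $\mathcal C$, i.e.\ $\mathcal C$ has the free vertex property.

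The substantive step is (iii) $\Rightarrow$ (i), and this is precisely where Theorem~\ref{freetree} does the work. Assuming $\mathcal C$ has the free vertex property, by definition the edge ideal $I(\mathcal C)$ has the free variable property, so by Theorem~\ref{freetree} the squarefree monomial ideal $I(\mathcal C)$ is a monomial ideal of forest type. Writing $I(\mathcal C)=I(\Delta)$, I would then translate the algebraic ``forest type'' condition back into the simplicial one: the generators of $I(\Delta)$ correspond bijectively to the facets of $\Delta$ via $F\mapsto x_F$, and for squarefree monomials $x_F,x_G,x_H$ one has $\gcd(x_F,x_H)\mid\gcd(x_F,x_G)$ if and only if $F\cap H\subseteq F\cap G$. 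Thus a subset of $G(I(\Delta))$ has a leaf in the monomial sense exactly when the corresponding subcomplex of $\Delta$ has a leaf in Faridi's sense, and so $I(\Delta)$ of forest type means that every nonempty subcomplex of $\Delta$ has a leaf. Restricting this property to each connected component shows that each component is a tree, hence $\Delta$, and therefore $\mathcal C$, is a forest.

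The main obstacle I anticipate is nothing deep but rather bookkeeping in the last step: verifying cleanly that the dictionary between ``leaf of a subset of $G(I)$'' and ``leaf of a subcomplex of $\Delta$'' really is a bijection in the squarefree case, and checking that the connectedness clause in the definition of a tree is taken care of by passing to connected components (which are themselves subcomplexes and so inherit the leaf property). Once this is written out, the corollary follows at once from the three cited results.
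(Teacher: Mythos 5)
Your proof is correct and follows essentially the same route as the paper, which likewise deduces the corollary by combining \cite[Theorem 3.2]{HHTZ} for (i)$\Leftrightarrow$(ii), \cite[Corollary 83.3a]{Sch} for (ii)$\Rightarrow$(iii), and Theorem~\ref{freetree} (together with the squarefree dictionary between leaves of subsets of $G(I(\Delta))$ and leaves of subcomplexes of $\Delta$) for (iii)$\Rightarrow$(i). Your extra remarks---that total balancedness is inherited by minors and that the connectedness clause is handled componentwise---are just careful spellings-out of details the paper leaves implicit.
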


To end this section, we would like to mention that if $I$ is the
facet ideal of some forest $\Delta$, then $I$ is a monomial ideal of
forest type. Hence $S/I$ is clean. By Corollary \ref{important},
$I^\vee$ has linear quotients.

\section{Some examples and questions}
In Sections 3, we show that the facet ideal $I$ of any forest is
clean and hence Stanley's conjecture holds for $S/I$. There is a
more general class of simplicial complexes, the class of
quasi-forests. It is natural to ask whether the facet ideal of any
quasi-forest is again clean?

According to \cite {Zh}, a connected simplicial complex $\Delta$ is
called a {\em quasi-tree}, if there exists an order $F_1,\ldots,
F_m$ of the facets, such that $F_i$ is a leaf of $\langle
F_1,\ldots, F_i \rangle$ for each $i=1,\ldots, m$. Such an order is
called a {\em leaf order}. A simplicial complex $\Delta$ with the
property that every connected component is a quasi-tree is called a
{\em quasi-forest}. It is clear that any forest is a quasi-forest.

Unfortunately the facet ideal of a quasi-forest need not to be
clean. For example the facet ideal of the quasi-tree
$\Gamma=\langle
\{1,2,3,4\},\{1,4,5\},\{1,2,8\},\{2,3,7\},\{3,4,6\}\rangle$,  as
in Figure \ref{Fig1}, is not clean. Indeed
$$I(\Gamma)^\vee=(x_1x_3,x_2x_4,x_4x_7x_8,x_1x_6x_7,x_1x_4x_7,x_2x_3x_5,x_1x_2x_6,x_2x_5x_6,x_3x_4x_8,x_3x_5x_8)$$
has no linear quotients, even no componentwise linear quotients.

\begin{figure}[hbt]
\begin{center}
\psset{unit=0.5cm}
\begin{pspicture}(-1,-1)(7,7)
\rput(0.5,3){2} \rput(3,0.5){3} \rput(-0.5,-0.5){7}
\rput(3,5.5){1} \rput(6.5,6.5){5} \rput(5.5,3){4}
\rput(6.5,-0.5){6} \rput(-0.5,6.5){8}
\pspolygon[style=fyp,fillcolor=medium](1,3)(0,0)(3,1)
\pspolygon[style=fyp, fillcolor=medium](3,1)(6,0)(5,3)
\pspolygon[style=fyp,fillcolor=medium](5,3)(6,6)(3,5)
\pspolygon[style=fyp, fillcolor=medium](3,5)(0,6)(1,3)
\psline(3,5)(3,1) \psline[linestyle=dotted](1,3)(5,3)
\end{pspicture}
\end{center}
\caption{}\label{Fig1}
\end{figure}

One might expect that the facet ideal of any quasi-forest which is
not a forest is not clean. The following example shows that this is
not the case. The facet ideal of the quasi-tree
$\Gamma'=\langle\{1,2,3\},\{2,4,5\},\{2,3,5\},\{3,5,6\}\rangle$, as
in Figure\ref{Fig2}, is clean. Since
$I(\Gamma')^\vee=(x_3x_5,x_2x_5,x_1x_5,x_2x_6,x_2x_3,x_3x_4)$ has
linear quotients in the given order.

\begin{figure}[hbt]
\begin{center}
\psset{unit=1cm}
\begin{pspicture}(0,0.5)(3,3)
\rput(1.5,2.6 ){1} \rput(1.5,0.2){5} \rput(0.7,1.4){2}
\rput(2.3,1.4){3} \rput(0.2,0.3){4} \rput(2.8,0.3){6}
\pspolygon[style=fyp,
fillcolor=medium](0.5,0.5)(1.5,0.5)(1,1.4)
 \pspolygon[style=fyp, fillcolor=medium](1.5,0.5)(1,1.4)(2,1.4)
 \pspolygon[style=fyp, fillcolor=medium](1.5,0.5)(2,1.4)(2.5,0.5)
  \pspolygon[style=fyp, fillcolor=medium](1,1.4)(1.5,2.3)(2,1.4)
\end{pspicture}
\end{center}
\caption{}\label{Fig2}
\end{figure}
\medskip

It would be interesting to classify all quasi-forests such that
their facet ideals are clean.

 Even though  $I(\Gamma)$ ($\Gamma$ is the quasi-tree as given in Figure 1) is not
clean we will show that Stanley's conjecture holds for
$S/I(\Gamma)$. First we recall some notation and results from
\cite{HSY}.

Let $\Delta$ be a simplicial complex  on the vertex set $[n]$. A
subset ${\mathfrak I}\subset \Delta$ is called an {\em interval} in
$\Delta$, if there exits faces $F,G\in \Delta$ such that ${\mathfrak
I}=\{H\in \Delta \: F\subseteq H\subseteq G\}$. We denote this
interval by $[F,G]$. A {\em partition} $\mathcal P$ of $\Delta$ is a
presentation of $\Delta$ as a disjoint union of intervals in
$\Delta$. Stanley calls a simplicial complex $\Delta$ {\em
partitionable} if there exists a partition
$\Delta=\Union_{i=1}^r[F_i,G_i]$ with ${\mathcal
F}(\Delta)=\{G_1,\ldots, G_r\}$ and conjectured \cite[Conjecture
2.7]{St1} (see also \cite[Problem 6]{St2}) that each Cohen-Macaulay
simplicial complex is partitionable.  It follows from
\cite[Corollary 3.5 ]{HSY} that the conjecture on Stanley
decompositions implies the conjecture on partitionable simplicial
complexes.

For $F\subseteq [n]$ we set $x_F=\prod_{i\in F}x_i$ and
$Z_F=\{x_i\:\, i\in F\}$. It follows from \cite[Proposition
3.2]{HSY} that if $\Union_{i=1}^r[F_i,G_i]$ is a partition of
$\Delta$, then $\Dirsum_{i=1}^rx_{F_i}K[Z_{G_i}]$ is a Stanley
decomposition of $S/I_\Delta$.

Now let $\Delta$ be  the simplicial complex with the property that
$I_{\Delta}=I(\Gamma)$. Then  Stanley's conjecture holds for
$S/I(\Gamma)$, if there is a partition $\Union_{i=1}^r[F_i,G_i]$ of
$\Delta$  such that $|G_i|\geq \depth S/I_\Delta$ for all $i$.

The facets of $\Delta$ are
\begin{eqnarray*}
&&\{1,3,5,6,7,8\}, \{2,4,5,6,7,8\}, \{1,2,3,5,6\}, \{2,3,4,5,8\}, \{2,3,5,6,8\},\\
&& \{1,4,6,7,8\}, \{3,4,5,7,8\}, \{1,3,4,7,8\},\{1,2,5,6,7\}\,
\text{and}\, \{1,2,4,6,7\}.
\end{eqnarray*}
Consider the  partition
\begin{eqnarray*}
{\mathcal P}&=&[\emptyset,135678]\cup[2,12356]\cup[4,245678]
\cup[14,14678] \cup[27,12567]\cup[34,34578]\\
&&\cup[28,23568]\cup[124,12467]\cup[134,13478]\cup[234,23458]\cup[278,25678]
\end{eqnarray*}
of $\Delta$. Here $135678$ stands for the set $\{1,3,5,6,7,8\}$, and
a similar notation is used for the other sets.

${\mathcal P}$ has the property that the cardinality of upper face
of each interval is greater than or equal to
$$\min \{|F|\: F \text{
is a facet of }\Delta\} \geq \depth(S/I_\Delta)=\depth(S/I(\Gamma)).
$$
This shows that Stanley's conjecture holds for $S/I(\Gamma)$.

\end{document}